\newfont{\cyr}{wncyr10 scaled 1100}
\newfont{\cyrr}{wncyr9 scaled 1000}
\theoremstyle{plain}
\newtheorem{theorem}{Theorem}[section]
\newtheorem{proposition}[theorem]{Proposition}
\newtheorem{lemma}[theorem]{Lemma}
\newtheorem{corollary}[theorem]{Corollary}
\theoremstyle{definition}
\newtheorem{conjecture}[theorem]{Conjecture}
\newtheorem{definition}[theorem]{Definition}
\newtheorem{assumption}[theorem]{Assumption}
\newtheorem*{mainconjecture}{Main Conjecture}
\theoremstyle{remark}
\newtheorem{remark}[theorem]{Remark}
\newcommand{\Q}{\mathds Q}
\newcommand{\Z}{\mathds Z}
\newcommand{\C}{\mathds C}
\newcommand{\D}{\mathds D}
\DeclareMathOperator{\Frob}{Frob}
\DeclareMathOperator{\Hom}{Hom}
\DeclareMathOperator{\Gal}{Gal}
\DeclareMathOperator{\GL}{GL}
\DeclareMathOperator{\Sel}{Sel}
\DeclareMathOperator{\KS}{\mathbf{KS}}
\DeclareMathOperator{\Ind}{Ind}
\DeclareMathOperator{\Fil}{Fil}
\newcommand{\res}{\mathrm{res}}
\newcommand{\cores}{\mathrm{cores}}
\newcommand{\cor}{\mathrm{cores}}
\newcommand{\tr}{\mathrm{tr}}
\newcommand{\ord}{\mathrm{ord}}
\definecolor{Indigo}{rgb}{0.2,0.1,0.7}
\definecolor{Violet}{rgb}{0.5,0.1,0.7}
\definecolor{White}{rgb}{1,1,1}
\definecolor{Green}{rgb}{0.1,0.9,0.2}
\newcommand{\longmono}{\mbox{\;$\lhook\joinrel\longrightarrow$\;}}
\newcommand{\longepi}{\mbox{\;$\relbar\joinrel\twoheadrightarrow$\;}}
\newcommand{\dirlim}{\mathop{\varinjlim}\limits}
\newcommand{\invlim}{\mathop{\varprojlim}\limits}
\newcommand{\p}{\mathfrak p}
\newcommand{\m}{\mathfrak m}
\newcommand{\cO}{{\mathcal O}}
\newcommand{\HH}{{\mathcal H}}
\newcommand{\T}{\mathbf{T}}
\begin{document}

\title[Kolyvagin systems of generalized Heegner cycles]{Kolyvagin systems and Iwasawa theory\\of generalized Heegner cycles}


\author{Matteo Longo and Stefano Vigni}

\thanks{The authors are partially supported by PRIN 2010--11 ``Arithmetic Algebraic Geometry and Number Theory''. The first author is also partially supported by PRAT 2013 ``Arithmetic of Varieties over Number Fields''; the second author is also partially supported by PRA 2013 ``Geometria Algebrica e Teoria dei Numeri''.}

\begin{abstract}
Iwasawa theory of Heegner points on abelian varieties of $\GL_2$ type has been studied by, among others, Mazur, Perrin-Riou, Bertolini and Howard. The purpose of this paper is to describe extensions of some of their results in which abelian varieties are replaced by the Galois cohomology of Deligne's $p$-adic representation attached to a modular form of even weight $>2$. In this setting, the role of Heegner points is played by higher-dimensional Heegner-type cycles that have been recently defined by Bertolini, Darmon and Prasanna. Our results should be compared with those obtained, via deformation-theoretic techniques, by Fouquet in the context of Hida families of modular forms.
\end{abstract}

\address{Dipartimento di Matematica, Universit\`a di Padova, Via Trieste 63, 35121 Padova, Italy}
\email{mlongo@math.unipd.it}
\address{Dipartimento di Matematica, Universit\`a di Genova, Via Dodecaneso 35, 16146 Genova, Italy}
\email{vigni@dima.unige.it}

\subjclass[2010]{11R23, 11F11}

\keywords{Iwasawa theory, modular forms, generalized Heegner cycles}

\maketitle


\section{Introduction}

Initiated by Mazur's paper \cite{Maz2}, Iwasawa theory of Heegner points on abelian varieties of $\GL_2$ type (most notably, elliptic curves) has been investigated by, among others, Perrin-Riou (\cite{PR}), Bertolini (\cite{Ber1}, \cite{Ber2}) and Howard (\cite{Ho1}, \cite{Ho2}). A recurrent theme in all these works is the study of pro-$p$-Selmer groups, where $p$ is a prime number, in terms of Iwasawa modules built out of compatible families of Heegner points over the anticyclotomic $\Z_p$-extension of an imaginary quadratic field. In particular, several results on the structure of Selmer groups obtained by Kolyvagin by using his theory of Euler systems (\cite{Kol-Euler}) have been generalized to an Iwasawa-theoretic setting.

The goal of the present paper is to address similar questions in which abelian varieties are replaced by the Galois cohomology of Deligne's $p$-adic representation attached to a modular form $f$ of even weight $>2$. In this context, the role of Heegner points is played by generalized Heegner cycles defined by Bertolini, Darmon and Prasanna in \cite{BDP} or, rather, by a variant of them considered by Castella and Hsieh in \cite{CH}. As their name suggests, these cycles are a generalization of the Heegner cycles that were introduced by Nekov\'a\v{r} in \cite{Nek} in order to extend Kolyvagin's theory to Chow groups of Kuga--Sato varieties. 

Let $N\geq3$ be an integer, let $k\geq4$ be an even integer and let $f$ be a normalized newform of weight $k$ and level $\Gamma_0(N)$, whose $q$-expansion will be denoted by
\[ f(q)=\sum_{n\geq1}a_nq^n. \] 
Fix an imaginary quadratic field $K$ of discriminant coprime to $Np$ in which all the prime factors of $N$ split and let $p$ be a prime number not dividing $N$. Fix also embeddings $K\hookrightarrow\C$ and $\bar\Q\hookrightarrow\bar\Q_p$, where $\bar\Q$ and $\bar\Q_p$ denote algebraic closures of $\Q$ and $\Q_p$, respectively. Write $F$ for the number field generated over $\Q$ by the Fourier coefficients $a_n$ of $f$ and let $\cO_F$ be its ring of integers. Let $\p$ be a prime ideal of $\mathcal O_F$ above $p$ and denote by $V_{f,\p}$ the $p$-adic representation of $G_\Q:=\Gal(\bar\Q/\Q)$ attached to $f$ and $\p$ by Deligne (\cite{Del-Bourbaki}). If $F_\p$ is the completion of $F$ at $\p$ then $V_{f,\p}$ is an $F_\p$-vector space of dimension $2$ that comes equipped with a continuous action of $G_\Q$. Let $\cO_\p$ be the valuation ring of $F_\p$. In \S \ref{admissible-subsec} we introduce the notion of an \emph{admissible triple}: throughout this article we assume that $(f,K,\p)$ is such a triple. Here we content ourselves with pointing out that we insist that the prime $p$ be unramified in $F$ and split in $K$ and that $V_{f,\p}$ have large Galois image. Moreover, we require that $a_p\in\cO^\times_\p$, which is an ordinariness condition on $f$ at $p$.

Now let $K_\infty$ be the anticyclotomic $\Z_p$-extension of $K$ (i.e., the unique $\Z_p$-extension of $K$ that is Galois and dihedral over $\Q$), set $\Gamma_\infty:=\Gal(K_\infty/K)$ and form the Iwasawa algebra $\Lambda:=\cO_\p[\![\Gamma_\infty]\!]\simeq\cO_\p[[X]]$. As in \cite{Nek}, one can define a $G_\Q$-representation $T$ that is a free $\cO_\p$-module of rank $2$. Then $V:=T\otimes_{\cO_\p}F_\p$ is the $k/2$-twist of $V_{f,\p}$, and we set $A:=V/T$. Finally, denote by $H^1_f(K_\infty,A)$ the Bloch--Kato Selmer group  of $A$ over $K_\infty$ and by $\mathcal X_\infty$ the Pontryagin dual of $H^1_f(K_\infty,A)$, which is finitely generated over $\Lambda$ (Corollary \ref{coro-fin-gen}). Our main result describes, in particular, the $\Lambda$-module structure (up to pseudo-isomorphism) of $\mathcal X_\infty$. 

The key tool in our arguments is a certain $\Lambda$-submodule $\mathcal H_\infty$ of the pro-$p$ Bloch--Kato Selmer group $\hat H^1_{f}(K_\infty,T)$ of $f$ over $K_\infty$. We introduce this $\Lambda$-module, which is built in terms of the generalized Heegner cycles of Bertolini--Darmon--Prasanna as slightly modified by Castella--Hsieh, in Definition \ref{iwasawa-module-dfn}. Two features of the collection of cycles studied by Castella and Hsieh in \cite{CH} that we crucially exploit and that do not seem to be available for classical Heegner cycles are trace-compatibility and non-triviality along the $\Z_p$-extension $K_\infty/K$, the latter result representing a higher weight analogue of a well-known theorem of Cornut for Heegner points on rational elliptic curves (\cite{Co}).

Let $\iota:\Lambda\rightarrow\Lambda$ denote the involution induced by inversion in $\Gamma_\infty$ and if $X$ is a finitely generated torsion $\Lambda$-module then write $\mathrm{char}(X)$ for the characteristic ideal of $X$. Our main result, which is proved in \S \ref{main-proof-subsec}, is 

\begin{theorem} \label{main-thm}
Suppose that $(f,K,\p)$ is an admissible triple. Then there exist a finitely generated torsion $\Lambda$-module $M$ such that $\mathrm{char}(M)=\mathrm{char}(M)^\iota$ and a pseudo-isomorphism 
\[ \mathcal X_\infty\sim \Lambda\oplus M\oplus M. \]
Moreover, $\mathrm{char}(M)$ divides $\mathrm{char}\Big(\hat H^1_{f}(K_\infty,T)/\mathcal H_\infty\Big)$. 
\end{theorem}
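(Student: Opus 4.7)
The plan is to transport to the present weight-$k$ setting the Kolyvagin-system machinery of Howard, with the Castella--Hsieh generalized Heegner cycles playing the role of Heegner points. First I would use these cycles to build a $\Lambda$-adic Kolyvagin system $\kappa^\infty\in\KS(T,\mathcal F_\Lambda,\mathcal L)$, where $\mathcal F_\Lambda$ is a Selmer structure refining the Bloch--Kato local conditions (with a Panchishkin-type condition at each prime of $K$ above $p$) and $\mathcal L$ is the standard set of Kolyvagin primes inert in $K$. The construction follows the usual Kolyvagin-derivative recipe: for each squarefree product $n$ of primes in $\mathcal L$ one lifts the Castella--Hsieh cycle class at the appropriate ring class level, applies Kolyvagin's derivative operator, and invokes trace-compatibility along $K_\infty/K$ to assemble these classes into a $\Lambda$-adic system. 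The finite/singular relations at Kolyvagin primes come from the Euler-system-style relations satisfied by the Castella--Hsieh cycles, which are formally identical to those of Nekov\'a\v r's classical Heegner cycles.

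Next I would verify that $\kappa^\infty$ is non-trivial in the sense of Mazur--Rubin, so that Howard's structure theorem applies. Non-triviality of the bottom class $\kappa^\infty_1\in\hat H^1_{f}(K_\infty,T)$ is exactly the higher-weight Cornut-type result quoted from \cite{CH}, and this propagates to primitivity of the full system modulo height-one primes of $\Lambda$ by the standard descent argument. The admissibility hypotheses (large Galois image in $\Aut(T)$, $p$ split in $K$, $p\nmid N$, ordinariness via $a_p\in\cO_\p^\times$, unramifiedness of $p$ in $F$) are precisely the running hypotheses of Howard's framework: they guarantee that the local condition at $p$ arises from a rank-one ordinary sub-representation, that complex conjugation acts with the correct eigenvalues on the derivative classes, and that Chebotarev arguments on Kolyvagin primes produce the needed Galois elements.

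With these inputs in place, Howard's main structure theorem for self-dual Kolyvagin systems over $\Lambda$ produces a finitely generated torsion $\Lambda$-module $M$ with $\mathrm{char}(M)=\mathrm{char}(M)^\iota$ and a pseudo-isomorphism $\mathcal X_\infty\sim\Lambda\oplus M\oplus M$. The self-duality of $T$ (arising from the critical $k/2$-twist of Deligne's representation) supplies the Flach-type perfect pairing on Selmer groups responsible for the doubling $M\oplus M$. The divisibility $\mathrm{char}(M)\mid\mathrm{char}\bigl(\hat H^1_{f}(K_\infty,T)/\mathcal H_\infty\bigr)$ then falls out of the bounding step of Howard's argument: the index in $\hat H^1_{f}(K_\infty,T)$ of the submodule generated by the bottom class $\kappa^\infty_1$ of the Kolyvagin system controls the error term $M$, and $\mathcal H_\infty$ is defined precisely to be (the saturation of) that submodule.

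The hardest part, I expect, is verifying the compatibility of local conditions at the two primes of $K$ above $p$: one must show that the local condition defining $\mathcal F_\Lambda$, built from the Panchishkin-type ordinary filtration on $T$, interpolates the Bloch--Kato condition on $A=V/T$ throughout the anticyclotomic tower, and that the Castella--Hsieh classes genuinely land in this $\Lambda$-adic Selmer group. This step relies essentially on the splitting of $p$ in $K$ and on the ordinariness hypothesis $a_p\in\cO_\p^\times$, and it is what makes Howard's self-dual Kolyvagin-system formalism directly applicable to $T$.
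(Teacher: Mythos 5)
Your proposal follows the paper's strategy closely: construct a $\Lambda$-adic Kolyvagin system from the Castella--Hsieh cycles, check Howard's hypotheses (H.0)--(H.5) together with the ordinary filtration conditions at $p$, and invoke Howard's structure theorem for self-dual $\Lambda$-adic Kolyvagin systems to get the pseudo-isomorphism $\mathcal X_\infty\sim\Lambda\oplus M\oplus M$ and the divisibility. One point you elide is substantive: you write that $\mathcal H_\infty$ is \emph{defined} to be (the saturation of) the $\Lambda$-submodule generated by the bottom class $\kappa_1$, which would make the divisibility in the last part nearly a restatement of Howard's bound. In the paper, $\mathcal H_\infty$ is defined \emph{a priori} as the inverse limit of the $\Lambda_m$-modules generated by all the corestricted cycle classes $\alpha_j[1]$, and it is a separate theorem (Theorem~\ref{non-triv}, proved by adapting Perrin-Riou's norm-relation computations and Howard's Lemmas~2.3.8--2.3.9, and using the Castella--Hsieh nonvanishing result) that this $\Lambda$-module is free of rank one and generated by $\tilde\kappa_1$. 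Without that step the divisibility $\mathrm{char}(M)\mid\mathrm{char}\bigl(\hat H^1_f(K_\infty,T)/\mathcal H_\infty\bigr)$ would not refer to the explicitly constructed Iwasawa module of generalized Heegner cycles, which is the object one actually wants to control.
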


We conjecturally refine the last part of Theorem \ref{main-thm} and propose in Conjecture \ref{main-conj} the following

\begin{mainconjecture}
$\mathrm{char}(M)=\mathrm{char}\Big(\hat H^1_{f}(K_\infty,T)/\mathcal H_\infty\Big)$.
\end{mainconjecture}

Our strategy for proving Theorem \ref{main-thm} is an extension to higher weights of arguments of Howard for elliptic curves and abelian varieties of $\GL_2$ type, i.e., for weight $2$ modular forms (\cite{Ho1}, \cite{Ho2}). In turn, Howard's work builds on and refines the theory of Kolyvagin systems developed by Mazur and Rubin (\cite{MR}). More precisely, the main contribution of our paper is the construction of a Kolyvagin system of generalized Heegner cycles and its application to the study of Iwasawa-theoretic questions for modular forms of even weight $\geq4$.

Finally, we remark that, by adopting a deformation-theoretic point of view and using (a generalization of) Howard's big Heegner points (\cite{Howard-Inv}), essentially the same results have been obtained by Fouquet in the context of Hida families of modular forms (\cite{Fouquet}). 

\section{Bloch--Kato Selmer groups in $\Z_p$-extensions}

Our goal in this section is to introduce the Selmer groups we shall be interested in and state a ``control theorem'' for them.

\subsection{Bloch--Kato Selmer groups} \label{sec-Bloch-Kato} 

We begin with a general discussion of Selmer groups of $p$-adic representations. 

For a number field $E$ and a crystalline $p$-adic representation $V$ of $G_E$ we introduce local conditions as follows. For primes $v\nmid p$ of $E$ we let $H^1_f(E_v,V)$ be the group of unramified cohomology classes, while for primes $v\,|\,p$ we define $H^1_f(E_v,V)$ to be the kernel of the map induced on cohomology by the natural map $V\rightarrow V\otimes_{\Q_p}B_\mathrm{cris}$, where $B_\mathrm{cris}$ is Fontaine's ring of crystalline periods. Moreover, for every place $v$ of $E$ we set
\[ H^1_s(E_v,V):=H^1(E_v,V)/H^1_f(E_v,V) \] 
and write
\[ \partial_v:H^1(E,V)\longrightarrow H^1_s(E_v,V) \] 
for the composition of the restriction $H^1(E,V)\rightarrow H^1(E_v,V)$ with the canonical projection. 

The \emph{Bloch--Kato Selmer group} of $V$ over $E$ (\cite[Sections 3 and 5]{BK}) is the group $H^1_f(E,V)$ that makes the sequence 
\[ 0\longrightarrow H^1_f(E,V)\longrightarrow H^1(E,V)\xrightarrow{\prod_v\partial_v}\prod_vH^1_s(E_v,V) \] 
exact. If $T$ is a $G_E$-stable lattice in $V$ then we set $A:=V/T$ and, for all integers $n\geq1$, let $A_{p^n}$ denote the $p^n$-torsion of $A$. There is a canonical isomorphism $A_{p^n}\simeq T/p^nT$. 

The projection $p:V\twoheadrightarrow A$ and the inclusion $i:T\hookrightarrow V$ induce 
maps 
\[ p: H^1(E,V)\longrightarrow H^1(E,A),\qquad i: H^1(E,T)\longrightarrow H^1(E,V); \] 
let us define $H^1_f(E,A):=p\bigl(H^1_f(E,V)\bigr)$ and $H^1_f(E,T):=i^{-1}\bigl(H^1_f(E,V)\bigr)$. Furthermore, the inclusion $i_n:A_{p^n}\hookrightarrow A$ and the projection $p_n:T\twoheadrightarrow T/p^nT$ induce maps 
\[ i_n:H^1(E,A_{p^n})\longrightarrow H^1(E,A),\qquad p_n:H^1(E,T)\longrightarrow H^1(E,T/p^nT); \]
we set $H^1_f(E,T/p^nT):=p_n\bigl( H^1_f(E,T)\bigr)$ and $H^1_f(E,A_{p^n}):=i_n^{-1}\bigl(H^1_f(E,A)\bigr)$. It can be checked that the isomorphisms $A_{p^n}\simeq T/p^nT$ induce isomorphisms $H^1_f(E,A_{p^n})\simeq H^1_f(E,T/p^nT)$ between Selmer groups.


Now let $M\in\bigl\{V, T, A, A_{p^n}, T/p^nT\bigr\}$. If $L/E$ is a finite extension of number fields then restriction and corestriction induce maps 
\[ \res_{L/E}:H^1_f(E,M)\longrightarrow H^1_f(L,M),\qquad\cor_{L/E}:H^1_f(L,M)\longrightarrow H^1_f(E,M). \] 
Finally, if $E$ is a number field and $\ell$ is a prime number then we set
\[ H^i_f(E_\ell,M):=\bigoplus_{\lambda|\ell}H^i_f(E_\lambda,M),\quad H^i_s(E_\ell,M):=\bigoplus_{\lambda|\ell}H^i_s(E_\lambda,M),\quad\partial_\ell:=\bigoplus_{\lambda|\ell}\partial_\lambda, \]
the direct sums being taken over the primes $\lambda$ of $E$ above $\ell$.

\subsection{Admissible triples} \label{admissible-subsec}

As in the introduction, let $f$ be a normalized newform of weight $k$ for $\Gamma_0(N)$ and write $\cO_F$ for the ring of integers of the number field $F$ generated by the Fourier coefficients of $f$. Let $\phi$ be Euler's function and let $\Xi$ be the set of prime numbers $p$ satisfying at least one of the following conditions:   
\begin{itemize} 
\item $p\,|\,6N(k-2)!\phi(N)c_f$; 
\item the image of the $p$-adic representation 
\[ \rho_{f,p}:G_\Q\longrightarrow\GL_2(\cO_F\otimes\Z_p) \] 
attached to $f$ by Deligne does not contain the set 
\[ \bigl\{g\in\GL_2(\cO_F\otimes\Z_p)\;\mid\;\det(g)\in(\Z_p^\times)^{k-1}\bigr\}. \]  
\end{itemize}

By \cite[Lemma 3.8]{LV}, the set $\Xi$ is finite. Let $\p$ be a maximal ideal of $\cO_F$ above $p$ and let $\cO_\p$ be the completion of $\cO_F$ at $\p$, whose field of fractions will be denoted by $F_\p$. Note that one recovers, up to isomorphism, $V_{f,\p}$ from $\rho_{f,p}$ by extending scalars to $\Q_p$ and projecting onto $\GL_2(F_\p)$. Finally, let $K$ be an imaginary quadratic field and write $h_K$ for its class number.

\begin{definition} \label{admissible-dfn}
The triple $(f,K,\p)$ is \emph{admissible} if 
\begin{enumerate} 
\item $p\notin\Xi\cup\{\text{$\ell$ prime}:\ell\,|\,h_K\}$;
\item  $p$ does not ramify in $F$;
\item $p$ splits in $K$;
\item \label{ass-cond} $a_p\in\cO_\p^\times$.
\end{enumerate} 
\end{definition}

\begin{remark}
Since $\Xi$ is finite, conditions (1) and (2) in Definition \ref{admissible-dfn} exclude only finitely many primes. On the other hand, the set of primes satisfying condition (3) has density $\frac{1}{2}$. Finally, in light of results of Serre on eigenvalues of Hecke operators (\cite[\S 7.2]{Serre-Cheb}), it seems reasonable to expect that condition (4), which is an ordinariness property of $f$ at $p$, holds for infinitely many $p$ (at least if $F\not=\Q$). In fact, questions of this sort appear to lie in the circle of ideas of the conjectures of Lang and Trotter on the distribution of traces of Frobenius automorphisms acting on elliptic curves (\cite{LT}) and of their extensions to higher weight modular forms (see, e.g., \cite{MM}, \cite{MMS}).
\end{remark}

Throughout this article we shall always work under the following

\begin{assumption} \label{ass}
The triple $(f,K,\p)$ is admissible.
\end{assumption}

Finally, we also assume, for simplicity, that $\cO_K^\times=\{\pm1\}$, i.e., that $K\not=\Q(\sqrt{-1})$ and $K\not=\Q(\sqrt{-3})$.

\subsection{The anticyclotomic $\Z_p$-extension of $K$} \label{anticyclotomic-subsec}

In general, for every integer $n\geq1$ we write $K[n]$ for the ring class field of $K$ of conductor $n$. The triple $(f,K,\p)$ is assumed to be admissible, hence $p\nmid h_K$; since $p$ is unramified in $K$, we also have $p\nmid|\Gal(K[p]/K[1])|$. Moreover, $\Gal(K[p^{m+1}]/K[p])\simeq\Z/p^m\Z$ for all $m\geq1$. It follows that for all $m$ there is a splitting 
\[ \Gal(K[p^{m+1}]/K)\simeq \Gamma_m\times\Delta \] 
with $\Gamma_m\simeq\Z/p^m\Z$ and $\Delta\simeq\Gal(K[p]/K)$ of order prime to $p$. For every $m\geq1$ define $K_m$ as the subfield of $K[p^{m+1}]$ that is fixed by $\Delta$, so that 
\[ \Gal(K_m/K)=\Gamma_m\simeq\Z/p^m\Z. \]
The field $K_\infty:=\cup_{m\geq1}K_m$ is the anticyclotomic $\Z_p$-extension of $K$; equivalently, it is the $\Z_p$-extension of $K$ that is (generalized) dihedral over $\Q$. Set
\[ \Gamma_\infty:=\varprojlim_m \Gamma_m=\Gal(K_\infty/K)\simeq\Z_p. \] 
Furthermore, for every $m\geq1$ set $\Lambda_m:=\cO_\p[\Gamma_m]$ and define
\[ \Lambda:=\varprojlim_m\Lambda_m=\cO_\p[\![\Gamma_\infty]\!]. \]
Here the inverse limit is taken with respect to the maps induced by the natural projections
$\Gamma_{m+1}\rightarrow \Gamma_m$. For all $m\geq1$ fix a generator $\gamma_m$ of $\Gamma_m$ in such a way that ${\gamma_{m+1}|}_{K_m}=\gamma_m$; then $\gamma_\infty:=(\gamma_1,\dots,\gamma_m,\dots)$ is a topological generator of $\Gamma_\infty$. It is well known that the map
\[ \Lambda\overset\simeq\longrightarrow\cO_\p[[X]],\qquad\gamma_\infty\longmapsto1+X \]
is an isomorphism of $\cO_\p$-algebras (see, e.g., \cite[Proposition 5.3.5]{NSW}).

For an abelian pro-$p$ group $M$ write $M^\vee:=\Hom_{\Z_p}^{\rm cont}(M,\Q_p/\Z_p)$ for its Pontryagin dual, equipped with the compact-open topology (here $\Hom_{\Z_p}^{\rm cont}$ denotes continuous homomorphisms of $\Z_p$-modules and $\Q_p/\Z_p$ is discrete). In the rest of the paper it will be convenient to use also the alternative definition $M^\vee:=\Hom_{\cO_\p}^{\rm cont}(M,F_\p/\cO_\p)$, where $\Hom_{\cO_\p}^{\rm cont}$ stands for continuous homomorphisms of $\cO_\p$-modules and $F_\p/\cO_\p$ is given the discrete topology. It turns out that the two definitions are equivalent, as there is a non-canonical isomorphism between $\Hom_{\cO_\p}^{\rm cont}(M,F_\p/\cO_\p)$ and $\Hom_{\Z_p}^{\rm cont}(M,\Q_p/\Z_p)$ that depends on the choice of a $\Z_p$-basis of $\cO_\p$. See, e.g., \cite[Lemma 2.4]{Bru} for details. 

Let $I_\infty:=(\gamma_\infty-1)$ be the augmentation ideal of $\Lambda$ and for every integer $n\geq0$ consider the ideal $I_n:=\bigl(\gamma_\infty^{p^n}-1\bigr)$ of $\Lambda$; in particular, $I_0=I_\infty$. Now let $M$ be a continuous $\Lambda$-module. As before, the dual $M^\vee$ inherits a structure of continuous $\Lambda$-module. Since $\gamma_\infty^{p^n}$ is a topological generator of $\Gamma_n$, for all $n\geq0$ there are equalities
\begin{equation} \label{app-pontr}
M[I_n]=M^{\Gamma_n},\qquad M_{\Gamma_n}=M/I_nM.
\end{equation} We also recall that if $M$ is compact then 
\begin{equation} \label{C2}  
{(M^\vee)}_{\Gamma_n}=M^\vee/I_nM^\vee\simeq\bigl(M^{\Gamma_n}\bigr)^\vee \end{equation}
for every $n\geq0$.

\subsection{The control theorem} 

We resume the notation and conventions that we introduced in \S \ref{admissible-subsec}; in particular, Assumption \ref{ass} is in force. Let $T$ be the $G_\Q$-representation considered by Nekov\'a\v{r} in \cite[Proposition 3.1]{Nek}, where it is denoted by $A_\p$. This is a free $\cO_\p$-module of rank $2$. The $G_\Q$-representation $V:=T\otimes_{\cO_\p}F_\p$ is then the $k/2$-twist of the representation $V_{f,\p}$. Finally, define the $G_\Q$-representation $A:=V/T$. As above, we shall write $A_{p^n}$ for the $p^n$-torsion submodule of $A$. Observe that
\begin{equation} \label{A-union-eq}
A=\bigcup_{n\geq1}A_{p^n}=\dirlim_nA_{p^n}
\end{equation}
where the direct limit is taken with respect to the natural inclusions $A_{p^n}\hookrightarrow A_{p^{n+1}}$.

\begin{lemma} \label{vanishing-0-lemma}
\begin{enumerate}
\item $H^0(K_m,A)=0$ for all $m\geq0$.
\item $H^0(K_\infty,A)=0$.
\item $H^0(K_\infty,A_{p^n})=0$ for all $n\geq0$.
\end{enumerate}
\end{lemma}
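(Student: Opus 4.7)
All three statements reduce to the single claim $A_p^{G_{K_\infty}}=0$. Indeed, $(3)$ for arbitrary $n$ follows from the case $n=1$: if $0\ne x\in A_{p^n}^{G_{K_\infty}}$ has exact order $p^k$ (with $1\leq k\leq n$), then $p^{k-1}x$ is a nonzero $G_{K_\infty}$-invariant element of $A_p$, while the case $n=0$ is vacuous. Statement $(2)$ is then an immediate consequence of $(3)$, since any $a\in A^{G_{K_\infty}}$ has finite order and therefore lies in $A_{p^n}^{G_{K_\infty}}$ for some $n$, thanks to \eqref{A-union-eq}. Finally, $(2)$ implies $(1)$ because $K_m\subseteq K_\infty$ yields $G_{K_\infty}\subseteq G_{K_m}$, whence $A^{G_{K_m}}\subseteq A^{G_{K_\infty}}$.

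It therefore remains to prove that $A_p^{G_{K_\infty}}=0$. Writing $k_\p:=\cO_\p/\p$, the module $A_p\simeq T/\p T$ is a two-dimensional $k_\p$-vector space on which $G_\Q$ acts through the $k/2$-twist of the residual representation $\bar\rho_{f,\p}\colon G_\Q\to\GL_2(k_\p)$. A nonzero $v\in A_p^{G_{K_\infty}}$ would produce a $G_{K_\infty}$-stable line in $k_\p^2$ on which $\bar\rho_{f,\p}$ acts through a character, so it suffices to show that $\bar\rho_{f,\p}|_{G_{K_\infty}}$ is irreducible.

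By admissibility we have $p\notin\Xi$, so the image of $\rho_{f,p}$ in $\GL_2(\cO_F\otimes\Z_p)$ contains the set $\{g:\det(g)\in(\Z_p^\times)^{k-1}\}$; projecting onto the $\p$-component and reducing mod $\p$ yields $\bar H:=\bar\rho_{f,\p}(G_\Q)\supseteq\SL_2(k_\p)$. Since $K_\infty/\Q$ is Galois, the subgroup $\bar H_\infty:=\bar\rho_{f,\p}(G_{K_\infty})$ is normal in $\bar H$, and the quotient $\bar H/\bar H_\infty$ is itself a quotient of $\Gal(K_\infty/\Q)\simeq\Z_p\rtimes\{\pm 1\}$, hence pro-solvable. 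Because $p\geq 5$ (as $6$ divides the integer appearing in the definition of $\Xi$), the group $\SL_2(k_\p)$ is perfect; the composite $\SL_2(k_\p)\hookrightarrow \bar H\twoheadrightarrow \bar H/\bar H_\infty$ is therefore a homomorphism from a perfect group to a solvable group, and so must be trivial. We conclude that $\SL_2(k_\p)\subseteq \bar H_\infty$, and since $\SL_2(k_\p)$ acts irreducibly on $k_\p^2$, the restriction $\bar\rho_{f,\p}|_{G_{K_\infty}}$ is irreducible, as desired.

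\textbf{Main obstacle.} The one point requiring care is the twist relating $\bar\rho_{f,\p}$ to the Galois action on $A_p$: rather than ruling out fixed vectors of $\bar\rho_{f,\p}|_{G_{K_\infty}}$ directly, we must argue via irreducibility, which forces the detour through $\SL_2(k_\p)$. The remainder is a standard ``large image plus perfectness of $\SL_2(k_\p)$'' argument exploiting pro-solvability of $\Gal(K_\infty/\Q)$.
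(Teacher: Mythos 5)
Your proof is correct, and it takes a genuinely different route from the paper's. The paper proves part (1) first by citing \cite[Lemma 3.10(2)]{LV}, which gives $H^0(L,A_{p^n})=0$ for any finite \emph{solvable} extension $L/\Q$ (applied to $L=K_m$), and then deduces (2) and (3) by passing to direct limits over $m$. You reverse the logical order: you establish $A_p^{G_{K_\infty}}=0$ directly and then obtain (3), (2), (1) in that order by elementary torsion, union, and restriction arguments. Your core step — that $\bar\rho_{f,\p}(G_{K_\infty})\supseteq\SL_2(k_\p)$ because $\SL_2(k_\p)$ is perfect (since $p\geq5$ by admissibility, as $p\nmid 6$) while $\Gal(K_\infty/\Q)\simeq\Z_p\rtimes\{\pm1\}$ is prosolvable — is the same ``large image plus solvability'' mechanism that underlies the cited lemma, but you run it once at the top of the tower instead of level-by-level followed by a limit. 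This makes your argument self-contained (no external citation), at the cost of a slightly longer write-up. You also correctly handle the twist: since $A_p$ carries the $k/2$-twist of $\bar\rho_{f,\p}$, a $G_{K_\infty}$-fixed vector produces a stable \emph{line} rather than a fixed vector for $\bar\rho_{f,\p}|_{G_{K_\infty}}$, so it is irreducibility — not fixed-point-freeness — that one must establish; this point is easy to fumble and you got it right.
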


\begin{proof} Fix an integer $m\geq0$. The extension $K_m/\Q$ is solvable, so $H^0(K_m,A_{p^n})=0$ for all $n\geq0$ by \cite[Lemma 3.10, (2)]{LV}. It follows from \eqref{A-union-eq} that
\[ H^0(K_m,A)=H^0\Big(K_m,\dirlim_nA_{p^n}\Big)=\dirlim_nH^0(K_m,A_{p^n})=0, \]
which proves part (1). Since $H^0(K_\infty,A)=\sideset{}{_m}\dirlim H^0(K_m,A)$, part (2) follows as well. Finally, for all $n\geq0$ one has $H^0(K_\infty,A_{p^n})=\sideset{}{_m}\varinjlim H^0(K_m,A_{p^n})$, which implies part (3). \end{proof}

Define the discrete $\Lambda$-module 
\[ H^1_f(K_\infty,{A}):=\dirlim_mH^1_f(K_m,{A}), \] 
the injective limit being taken with respect to the restriction maps. 
The following version of Mazur's control theorem was proved in \cite{Ochiai}. 

\begin{theorem}[Ochiai] \label{CT}
For every $m\geq0$ the canonical restriction map 
\[ \res_{K_\infty/K_m}:H^1_f(K_m,{A})\longmono H^1_f(K_\infty,{A})^{\Gal(K_\infty/K_m)}\]
is injective and has finite cokernel of order bounded independently of $m$.  
\end{theorem}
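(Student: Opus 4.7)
The plan is to follow the template of Mazur's control theorem, in the Bloch--Kato formulation of Ochiai. Write $G_m:=\Gal(K_\infty/K_m)$.

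For injectivity I would apply the inflation--restriction exact sequence
\[ 0\longrightarrow H^1\bigl(G_m,H^0(K_\infty,A)\bigr)\longrightarrow H^1(K_m,A)\xrightarrow{\res}H^1(K_\infty,A)^{G_m}. \]
By part (2) of Lemma \ref{vanishing-0-lemma} the left term vanishes, so the restriction is injective on full cohomology and a fortiori on Selmer subgroups.

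For the cokernel I would invoke the snake lemma for the commutative diagram whose rows are the defining sequences of $H^1_f(K_m,A)$ and $H^1_f(K_\infty,A)^{G_m}$, linked by restriction maps $s,h,g$ in the three columns. Since $h$ is injective by the previous step, the cokernel of $s$ embeds into $\ker(g)=\bigoplus_v\ker(g_v)$, where $g_v$ is the local restriction map at a place $v$ of $K_m$. The theorem therefore reduces to bounding $\sum_v\#\ker(g_v)$ independently of $m$.

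For $v\nmid p$ the extension $K_\infty/K$ is unramified at $v$, so $g_v$ is essentially the restriction between unramified cohomology groups; only the finitely many primes of $K$ dividing $N$ contribute non-trivially, and at each of them $\ker(g_v)$ is controlled by $H^1$ of a procyclic decomposition subquotient with coefficients in $A^{I_v}$, a finite module whose size stabilizes with $m$ because the residue fields of $K_{m,v}$ and $K_{\infty,w}$ eventually agree. For $v\mid p$ I would use the ordinariness hypothesis $a_p\in\cO_\p^\times$, which endows $V$ with a Panchishkin filtration $0\to V^+\to V\to V^-\to 0$; under this filtration $H^1_f(K_{m,v},V)$ coincides, up to finite index, with the image of $H^1(K_{m,v},V^+)$, and the bound on $\ker(g_v)$ follows from the standard local control computation along the local $\Z_p$-extension $K_{\infty,v}/K_v$, producing a bound in terms of the finite group of $G_{K_{\infty,v}}$-invariants of $A^-$.

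The main obstacle is precisely this last step: the Bloch--Kato local condition at $p$ is a priori transcendental, and without the ordinariness assumption the local analysis at primes above $p$ would be much more delicate. The admissibility condition $a_p\in\cO_\p^\times$ is tailored so as to reduce the Bloch--Kato condition to a Greenberg-type one, for which uniform boundedness of the local kernels across the deeply ramified tower $K_{\infty,v}/K_v$ is classical and gives the desired bound independent of $m$.
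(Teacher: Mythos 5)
Your proof of injectivity via inflation--restriction and Lemma~\ref{vanishing-0-lemma} is exactly what the paper does. For the cokernel the two routes diverge in emphasis: the paper simply invokes \cite[Theorem 2.4]{Ochiai} and checks that its hypotheses (ii), (iii) hold, the key verification being that because $p$ splits in $K$ the local extension $K_{\infty,v}/K_v$ at $v\mid p$ is the cyclotomic $\Z_p$-extension; you instead unpack the internal snake-lemma argument that underlies Ochiai's theorem, reducing to uniform bounds on the local kernels $\ker(g_v)$ and handling $v\mid p$ via the ordinary (Panchishkin/Greenberg) filtration. The substance is the same --- both approaches hinge on ordinariness to make the Bloch--Kato condition at $p$ tractable and on the finiteness of $H^0(K_{\infty,v},A^-)$ --- so the paper's proof buys brevity at the cost of opacity, whereas yours is more self-contained but must be made precise at the points where it is currently a sketch.

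One small inaccuracy you should correct: your claim that for $v\nmid p$ ``the residue fields of $K_{m,v}$ and $K_{\infty,w}$ eventually agree'' is not true in general. In the anticyclotomic $\Z_p$-extension only the primes of $K$ lying over rational primes \emph{inert} in $K$ split completely; the primes of $K$ over \emph{split} rational primes (in particular those dividing $N$, since by hypothesis all prime divisors of $N$ split in $K$) are finitely decomposed, so their residue field extensions grow without bound. The uniform bound at such $v$ does not come from stabilization of the residue field, but from the finiteness of $A^{I_v}$ together with the fact that the relevant $H^1$ of a procyclic group with finite coefficient module has order bounded by that module. You should also explicitly record that $p$ splits in $K$, since that is precisely what identifies $K_{\infty,v}/K_v$ with the cyclotomic $\Z_p$-extension and makes the local control calculation at $p$ go through; this is the hypothesis the paper singles out when matching conditions (ii) and (iii) of Ochiai's theorem.
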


\begin{proof} This is essentially \cite[Theorem 2.4]{Ochiai}. Fix an integer $m\geq0$. To begin with, the inflation-restriction exact sequence reads
\begin{equation} \label{inf-res-control-eq}
0\longrightarrow H^1\bigl(\Gal(K_\infty/K_m),H^0(K_\infty,A)\bigr)\longrightarrow H^1(K_m,A)\longrightarrow H^1(K_\infty,A)^{\Gal(K_\infty/K_m)}.
\end{equation}
On the other hand, $H^0(K_n,A)=0$ for all $n$ by Lemma \ref{vanishing-0-lemma}, hence
\[ H^0(K_\infty,A)=\varinjlim_nH^0(K_n,A)=0. \]
Thus sequence \eqref{inf-res-control-eq} gives an injection $H^1(K_m,A)\hookrightarrow H^1(K_\infty,A)^{\Gal(K_\infty/K_m)}$, which in turn restricts to an injection between Bloch--Kato Selmer groups. The bound on the cokernel follows from part (2) of \cite[Theorem 2.4]{Ochiai} together with the following observation. Since $p$ splits in $K$, the local extension $K_{\infty,v}/K_v$ is the cyclotomic $\Z_p$ extension for each prime $v$ of $K$ above $p$, hence conditions (ii) and (iii) in part (2) of \cite[Theorem 2.4]{Ochiai} are satisfied. Since these are the only conditions required for the proof of \cite[Theorem 2.4]{Ochiai}, the result follows exactly as in \cite{Ochiai}. \end{proof}
 
Let 
\[ \mathcal X_\infty:=\Hom_{\cO_\p}^{\rm cont}\bigl(H^1_f(K_\infty,{A}),F_\p/\cO_\p\bigr) \]
be the Pontryagin dual of $H^1_f(K_\infty,{A})$, equipped with its canonical structure of compact 
$\Lambda$-module. We want to prove that $\mathcal X_\infty$ is finitely generated over $\Lambda$. For every $m\geq0$ let 
\[ \mathcal X_m:=\Hom_{\mathcal O_\p}^{\rm cont}\bigl(H^1_f(K_m,{A}),F_\p/\mathcal O_\p\bigr) \]
be the Pontryagin dual of $H^1_f(K_m,A)$. Each $\mathcal X_m$ has a natural structure of $\Lambda_m$-module and there is a canonical isomorphism of $\Lambda$-modules $\mathcal X_\infty\simeq\sideset{}{_m}\invlim\mathcal X_m$. Note that, since the Galois representation $A$ is unramified outside $Np$, the $\cO_\p$-modules $\mathcal X_m$ are finitely generated.

\begin{corollary} \label{coro2.2} 
For every $m\geq0$ there is a canonical surjection ${(\mathcal X_\infty)}_{\Gal(K_\infty/K_m)}\twoheadrightarrow \mathcal X_m$. 
\end{corollary}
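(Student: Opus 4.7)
The plan is to deduce the surjection directly from Ochiai's control theorem by Pontryagin duality; no real obstacle is expected, and the whole argument should fit in a few lines.

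First, I would invoke Theorem \ref{CT}: for every $m\geq 0$ the restriction map gives an injection
\[ \res_{K_\infty/K_m}:H^1_f(K_m,A)\longmono H^1_f(K_\infty,A)^{\Gal(K_\infty/K_m)}. \]
Since Pontryagin duality $(-)^\vee=\Hom_{\cO_\p}^{\rm cont}(-,F_\p/\cO_\p)$ is an exact contravariant functor between discrete and compact $\cO_\p$-modules, dualizing this injection yields a surjection
\[ \bigl(H^1_f(K_\infty,A)^{\Gal(K_\infty/K_m)}\bigr)^{\!\vee}\longepi H^1_f(K_m,A)^{\vee}=\mathcal X_m, \]
and this surjection is canonical and $\Lambda$-equivariant because so is the restriction map.

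Next, I would identify the source of the surjection with $(\mathcal X_\infty)_{\Gal(K_\infty/K_m)}$. Setting $H:=\Gal(K_\infty/K_m)$, note that $H$ is the closed subgroup of $\Gamma_\infty$ topologically generated by $\gamma_\infty^{p^m}$, so invariants under $H$ coincide with $M[I_m]$ for any continuous $\Lambda$-module $M$, as in \eqref{app-pontr}. Dualizing the defining exact sequence
\[ 0\longrightarrow H^1_f(K_\infty,A)^{H}\longrightarrow H^1_f(K_\infty,A)\xrightarrow{\;\gamma_\infty^{p^m}-1\;}H^1_f(K_\infty,A) \]
turns the kernel on the left into the cokernel of $\gamma_\infty^{p^m}-1$ on the dual, which is $\mathcal X_\infty/I_m\mathcal X_\infty=(\mathcal X_\infty)_{H}$. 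This is exactly the instance of \eqref{C2} applied to the compact $\Lambda$-module $\mathcal X_\infty$. Composing with the surjection of the previous paragraph produces the canonical surjection $(\mathcal X_\infty)_{\Gal(K_\infty/K_m)}\twoheadrightarrow\mathcal X_m$.

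There is essentially no hard step: the content of the corollary is entirely extracted from the injectivity half of Ochiai's control theorem together with the standard Pontryagin-duality swap between invariants and coinvariants; the (finite) control on cokernels from Theorem \ref{CT} is not needed for this statement, although it would be required to promote the surjection to an isomorphism with finite kernel.
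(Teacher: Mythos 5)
Your argument is correct and coincides with the paper's own proof: both dualize the injectivity half of Ochiai's control theorem (Theorem \ref{CT}) and then use \eqref{C2} and \eqref{app-pontr} to identify $\bigl(H^1_f(K_\infty,A)^{\Gal(K_\infty/K_m)}\bigr)^{\vee}$ with $\mathcal X_\infty/I_m\mathcal X_\infty = (\mathcal X_\infty)_{\Gal(K_\infty/K_m)}$. Your closing remark that the boundedness of the cokernels in Theorem \ref{CT} is not needed for the bare surjectivity is also accurate; the paper records the finite kernel of \eqref{coro-CT} only for use in the subsequent Corollary \ref{coro-fin-gen}.
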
 

\begin{proof} Fix an $m\geq0$. Thanks to \eqref{C2}, the injection of Theorem \ref{CT} gives, by duality, a surjection  
\begin{equation} \label{coro-CT}
\mathcal X_\infty/I_m\mathcal X_\infty\longepi\mathcal X_m
\end{equation} 
whose kernel is finite of order bounded independently of $m$. On the other hand, by \eqref{app-pontr}, the quotient $\mathcal X_\infty/I_m\mathcal X_\infty$ is canonically isomorphic to ${(\mathcal X_\infty)}_{\Gal(K_\infty/K_m)}$, and we are done. \end{proof} 

Now we can prove the main result of this subsection.

\begin{corollary} \label{coro-fin-gen}
The $\Lambda$-module $\mathcal X_\infty$ is finitely generated. 
\end{corollary}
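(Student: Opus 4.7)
The strategy is to deduce this from Corollary \ref{coro2.2} combined with a topological version of Nakayama's lemma for compact modules over the local ring $\Lambda\simeq\cO_\p[[X]]$.

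First, I would specialize Corollary \ref{coro2.2} to $m=0$, which yields a surjection
\[ \mathcal X_\infty/I_\infty\mathcal X_\infty\longepi \mathcal X_0 \]
with finite kernel (by the bounded cokernel statement in Theorem \ref{CT}, transported through Pontryagin duality). The text has already observed that, since $A$ is unramified outside $Np$ and the cohomology is taken over a number field $K_0=K$, the module $\mathcal X_0$ is finitely generated over $\cO_\p$. Combining this with the finite kernel, the quotient $\mathcal X_\infty/I_\infty\mathcal X_\infty$ is finitely generated over $\cO_\p$.

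Next, let $\pi$ be a uniformizer of $\cO_\p$ and write $\mathfrak m=(\pi,\gamma_\infty-1)$ for the maximal ideal of $\Lambda$. Reducing further modulo $\pi$, one sees that $\mathcal X_\infty/\mathfrak m\mathcal X_\infty$ is a finitely generated module over the (finite) residue field $\cO_\p/\pi\cO_\p$, and hence is finite.

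Finally, I would invoke the topological Nakayama lemma: a compact $\Lambda$-module $M$ is finitely generated over $\Lambda$ provided $M/\mathfrak m M$ is finite (see, e.g., the standard compact-module version in Neukirch--Schmidt--Wingberg). Since $\mathcal X_\infty=\invlim_m\mathcal X_m$ is compact as an inverse limit of finite or finitely generated $\cO_\p$-modules (each equipped with its natural profinite topology) and we have just shown $\mathcal X_\infty/\mathfrak m\mathcal X_\infty$ is finite, the conclusion follows. The only mild subtlety — and thus the main point to verify carefully — is the finiteness of the kernel in the surjection \eqref{coro-CT}; but this has already been extracted from the control theorem, so no new input is needed.
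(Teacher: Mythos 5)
Your proof is correct and follows essentially the same route as the paper: set $m=0$ in the control theorem to get a surjection $\mathcal X_\infty/I_\infty\mathcal X_\infty\twoheadrightarrow\mathcal X_0$ with finite kernel, observe $\mathcal X_0$ is finitely generated over $\cO_\p$, and invoke the topological Nakayama lemma. The only difference is that you spell out the further reduction modulo $\pi$ to verify finiteness of $\mathcal X_\infty/\mathfrak m\mathcal X_\infty$, a step the paper leaves implicit.
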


\begin{proof} By choosing $m=0$ in \eqref{coro-CT}, we obtain a surjection $\mathcal X_\infty/I_\infty\mathcal X_\infty\twoheadrightarrow\mathcal X_0$, whose kernel has size bounded independently of $m$. Since $\mathcal X_0$ is a finitely generated $\mathcal O_\p$-module, the result follows from a topological version of Nakayama's lemma (\cite[Corollary 1.5]{Bru} or \cite[Corollary 5.2.18, (ii)]{NSW}).\end{proof}

\section{The $\Lambda$-adic Kolyvagin system argument} \label{euler-sec}

In this section we prove a slight generalization of the $\Lambda$-adic argument described in \cite[\S 2.2]{Ho1} that will lead to Theorem \ref{MR}.

\subsection{Selmer triples} \label{triples-subsec}

Fix a prime number $p$, a coefficient ring $R$ (i.e., a noetherian, complete local ring with finite residue field of characteristic $p$), an imaginary quadratic field $K$ and a finitely generated $R$-module $T$ equipped with a continuous linear action of $G_K:=\Gal(\bar\Q/K)$ that is unramified outside a finite set $\Sigma$ of primes of $K$. Let $\m$ be the maximal ideal of $R$ and put $\bar T:=T/\mathfrak mT$. For every prime $v$ of $K$ we write $K_v$ for the completion of $K$ at $v$ and choose a decomposition group $G_{K_v}\subset G_K$, whose inertia subgroup will be denoted by $I_{K_v}$. 

Let $\mathcal L_0=\mathcal L_0(T)$ be the set of degree $2$ primes of $K$ that do not belong to $\Sigma$ and do not divide $p$. We often identify a prime $\lambda\in\mathcal L_0$ with its residual characteristic $\ell$ and write $\lambda\,|\,\ell$. Consequently, we use indifferently the symbols $\lambda$ and $\ell$ to denote the dependence of an object on such a prime; for example, we write either $K_\lambda$ or $K_\ell$ for a given $\lambda\in\mathcal L_0$. As in \cite[Definition 1.2.1]{Ho1}, for every $\lambda\in \mathcal L_0$ let $I_\ell$ be the smallest ideal of $R$ containing $\ell+1$ and such that $\Frob_\lambda\in\Gal(K_v^{\mathrm{unr}}/K_v)$ acts trivially on $T/I_\ell T$. For an integer $k\geq1$ let $\mathcal L_k=\mathcal L_k(T)$ be the subset of $\ell\in\mathcal L_0$ such that $I_\ell\subset p^k\Z_p$, and for $\lambda\in\mathcal L_0$ set $G_\ell:=k_\lambda^\times/k_\ell^\times$, where $k_\lambda$ and $k_\ell$ are the residue fields at $\lambda$ and $\ell$, respectively. Finally, let $\mathcal N_k$ denote the set of square-free products of elements in $\mathcal L_k$ and for each $n\in\mathcal N_0$ define the ideal $I_n:=\sum_{\ell\mid n}I_\ell$ in $R$ and the group $G_n:=\otimes_{\ell\mid n}G_\ell$. By convention, 
$1\in\mathcal N_k$ for all $k$, $I_1=(0)$ and $G_1=\Z$. 

For each prime $v$ of $K$ such that $v\nmid p$ and $v\not\in\Sigma$ we write $H^1_{\mathrm s}(K_v,T)$ for the \emph{singular} part of $H^1(K_v,T)$, i.e., the quotient of $H^1(K_v,T)$ by the \emph{finite} part 
\[ H^1_\mathrm{f}(K_v,T):=H^1_{\mathrm{unr}}(K_v,T):=\ker\Big(H^1(K_v,T)\longrightarrow H^1(K_v^{\mathrm{unr}},T)\Big). \] 
For primes $v$ of residual characteristic different from $p$ we also let $K_v^{(p)}$ denote a maximal totally tamely ramified abelian $p$-extension of $K_v$ and define the \emph{transverse} subgroup as
\[ H^1_{\mathrm{tr}} (K_v,T):=\ker\Big(H^1(K_v,T)\longrightarrow H^1(K_v^{(p)},T)\Big). \]  
By \cite[Proposition 1.1.9]{Ho1}, if $|k_v^\times|\cdot T=0$ then $H^1_{\mathrm{tr}}(K_v,T)$ projects isomorphically onto $H^1_\mathrm{s}(K_v,T)$ and there is a canonical splitting 
\[ H^1(K_v,T)=H^1_\mathrm{f}(K_v,T)\oplus H^1_{\mathrm{tr}}(K_v,T). \] 
On the other hand, by \cite[Proposition 1.1.7]{Ho1} there are canonical isomorphisms 
\[ H^1_\mathrm{f}(K_v,T)\simeq T/(\mathrm{Fr}_v-1)T,\qquad H^1_\mathrm{s}(K_v,T)\otimes k_v^\times\simeq T^{\mathrm{Fr}_v=1}, \]
which give a finite-singular comparison isomorphism 
\[ \phi_v^\mathrm{fs}:H^1_\mathrm{f}(K_v,T)\simeq T\overset\simeq\longrightarrow H^1_\mathrm{s}(K_v,T)\otimes k_v^\times \] 
when $G_{K_v}$ acts trivially on $T$.

As in \cite[p. 1445]{Ho1}, for any $n\ell\in\mathcal N_0$ there is a finite-singular isomorphism 
\[ \phi_\ell^\mathrm{fs}:H^1_\mathrm{f}(K_\ell,T/I_{n\ell}T)\overset\simeq\longrightarrow H^1_\mathrm{s}(K_\ell,T/I_{n\ell }T)\otimes G_\ell. \]
Finally, for every prime $v$ of $K$ let 
\[\mathrm{loc}_v: H^1(K,T)\longrightarrow H^1(K_v,T)\] be the localization map. 

Now let $(T,\mathcal F,\mathcal L)$ be a \emph{Selmer triple}. We refer to \cite[Definition 3.1.3]{MR} and \cite[Definition 1.2.3]{Ho1} for details. In particular, we fix a finite set $\Sigma(\mathcal F)$ of primes of $K$ containing $\Sigma$, all the primes above $p$ ad all the archimedean primes. Then $\mathcal F$ is a \emph{Selmer structure} on $T$ in the sense of \cite[Definition 1.1.10]{Ho1} (cf. also \cite[Definition 2.1.1]{MR}), so it corresponds to the choice of a local condition (i.e., a subgroup) $H^1_\mathcal F(K_v,T)\subset H^1(K_v,T)$ at each prime $v\in\Sigma(\mathcal F)$. Moreover, $\mathcal L$ is a subset of $\mathcal L_0$ disjoint from 
$\Sigma(\mathcal F)$. Let $\mathcal N=\mathcal N(\mathcal L)$ be the set of squarefree products of primes in $\mathcal L$, with the convention that $1\in\mathcal N$. 

The \emph{dual} of $T$ is the $R$-module $T^*:=\Hom_R(T,R(1))$ equipped with the structure of $G_K$-module given by $(\sigma\cdot f)(x):=\sigma f(\sigma^{-1}t)$. We define a Selmer structure $\mathcal F^*$ on $T^*$ by taking $\Sigma(\mathcal F^*)=\Sigma(\mathcal F)$ and the orthogonal complements with respect to the local Tate pairings as local conditions (see \cite[\S 2.3]{MR} for details). In this way we obtain a Selmer triple $(T^*,\mathcal F^*,\mathcal L)$ 
such that $\mathcal F(n)^*=\mathcal F^*(n)$ for all $n\in \mathcal N$ (\cite[Example 2.3.2]{MR}). 

Denote by $\tau\in G_\Q$ a fixed extension of the non-trivial element of $\Gal(K/\Q)$. As in \cite[\S 1.3]{Ho1}, we require the pair $(T,\mathcal F )$ to satisfy the following conditions:
\begin{enumerate} 
\item[(H.0)] $T$ is a free $R$-module of rank $2$;
\item[(H.1)] $\bar T$ is an absolutely irreducible representation of $G_K$ over $R/\mathfrak m$; 
\item[(H.2)] there exists a Galois extension $F$ of $\Q$ containing $K$ such that $G_F$ acts trivially on $T$ and $H^1\bigl(F(\mu_{p^\infty})/K,\bar T\bigr)=0$;
\item[(H.3)] for every $v\in\Sigma(\mathcal F)$ the local condition $\mathcal F$ at $v$ is cartesian on the quotient category of $T$ (see \cite[Definitions 1.1.2 and 1.1.3]{Ho1} for details); 
\item[(H.4)] there is a perfect, symmetric, $R$-bilinear pairing $(\,,\,):T\times T\rightarrow R(1)$ satisfying 
$(s^\sigma,t^{\tau\sigma\tau^{-1}})=(s,t)^\sigma$ for every $s, t\in T$ and $\sigma\in G_K$ and such that for every place $v$ of $K$ the local condition $\mathcal F$ at $v$ is its own exact orthogonal complement under the induced local pairing
\[ {\langle\cdot,\cdot\rangle}_v:H^1(K_v,T)\times H^1(K_{\bar v},T)\longrightarrow R; \]
\item[(H.5)]
\begin{enumerate} 
\item the action of $G_K$ on $\bar T$ extends to an action of $G_\Q$ and the action of $\tau$ splits $\bar T=\bar T^+\oplus \bar T^-$ into one-dimensional $R/\mathfrak m$-subspaces, where $\bar T^\pm$ is defined as the $\pm$-eigenspace for the action of $\tau$ on $\bar T$;
\item the condition $\mathcal F$ propagated to $\bar T$ (cf. \cite[\S 1.1]{Ho1} for definitions) is stable under the action of $G_\Q$; 
\item if we assume (H.4) to hold then the residual pairing 
\[ (\,,\,):\bar T\times\bar T\longrightarrow(R/\mathfrak m)(1) \] 
obtained from $(\,,\,)$ satisfies $(s^\tau,t^\tau)=(s,t)^\tau$ for all $s,t\in\bar T$. 
\end{enumerate}
\end{enumerate} 
See \cite[\S 1.3]{Ho1} for a comparison between these conditions and those, similar, in \cite[\S3.5]{MR}. 
One then defines the Selmer group attached to the Selmer structure $(T,\mathcal F)$ as 
\begin{equation} \label{selmer-group-F-eq}
H^1_\mathcal F(K,T):=\ker\!\bigg(H^1(K,T)\longrightarrow\bigoplus _{v}H^1(K_v,T)/H^1_{\mathcal F}(K_v,T)\bigg). 
\end{equation}

\subsection{Kolyvagin systems} \label{kolyvagin-subsec}

Fix a Selmer triple $(T,\mathcal F,\mathcal L)$ such that the pair $(T,\mathcal F)$ satisfies the assumptions above. Given $c\in \mathcal N$, we introduce a new Selmer triple $\bigl(T,\mathcal F(c),\mathcal L(c)\bigr)$ by defining 
\[ \Sigma\bigl(\mathcal F(c)\bigr):=\Sigma(\mathcal F)\cap\bigl\{v: v\,|\,c\bigr\},\qquad\mathcal L(c):=\bigl\{v\in\mathcal L:v\nmid c\bigr\} \] 
and taking 
\[ H^1_{\mathcal F(c)}(K_v,T):=\begin{cases}H^1_{\mathcal F}(K_v,T)&\text{if $v\nmid c$},\\[3mm] H^1_\mathrm{tr}(K_v,T)&\text{if $v\,|\,c$}.\end{cases}\]
For every product $n\ell\in\mathcal N_0$ there is a diagram
\begin{equation} \label{diagram}
\xymatrix@C=29pt{&&H^1_{\mathcal F(n\ell)}(K,T/I_{n\ell }T)\otimes G_{n\ell}\ar[d]^-{\mathrm{loc}_\ell}\\
H^1_{\mathcal F(n)}(K,T/I_nT)\otimes G_n\ar[r]^-{\mathrm{loc}_\ell}& H^1_\mathrm{f}(K_\ell,T/I_{n\ell}T)\otimes G_n\ar[r]^-{\phi_\ell^\mathrm{fs}\otimes1}&
H^1_\mathrm{s}(K_\ell,T/I_{n\ell}T)\otimes G_{n\ell}}
\end{equation}
whose row is exact. Let $(T,\mathcal F,\mathcal L)$ denote a Selmer triple. 

\begin{definition}\label{def-Kol-sys}
A \emph{Kolyvagin system} for $(T,\mathcal F,\mathcal L)$ is a collection $\kappa={\{\kappa_n\}}_{n\in \mathcal N(\mathcal L)}$ of classes 
\[ \kappa_n\in H^1_{\mathcal F(n)}(K,T/I_nT)\otimes G_n \] 
such that for every $n\ell\in\mathcal N(\mathcal L)$ the images of $\kappa_n$ and $\kappa_{n\ell}$ under the maps $(\phi_\ell^\mathrm{fs}\otimes1)\circ\mathrm{loc}_\ell$ and $\mathrm{loc}_\ell$ in \eqref{diagram} agree.  
\end{definition} 

The set of all Kolyvagin systems for $(T,\mathcal F,\mathcal L)$ is naturally endowed with an $R$-module structure. This $R$-module will be denoted by $\KS(T,\mathcal F,\mathcal L)$; we refer the reader to \cite[Remark 1.2.4]{Ho1} and \cite[Remark 3.1.4]{MR} for the functorial properties enjoyed by it.

\subsection{$\Lambda$-adic representations} \label{sec4.3} 

Let $\cO$ be the valuation ring of a finite extension $F$ of $\Q_p$, with maximal ideal $\m=(\pi)$ and residue field $k$. Let $\Lambda=\cO[\![\Gamma_\infty]\!]$ be the Iwasawa algebra with coefficients in $\cO$ of the anticyclotomic $\Z_p$-extension of $K$. Let $T$ be a free $\cO$-module of rank $2$ equipped with a continuous linear action of $G_\Q$ that is unramified outside a finite set of primes $\Sigma$ of $\Q$. Set $V:=T\otimes_{\mathcal O}F$ and $A:=V/T$. 

In addition to conditions (H.0)-(H.4) with $R=\cO$, we impose on $T$ the following set of assumptions, which are all verified when $T$, as in the case that concerns us, arises from a modular form.

\begin{assumption} \label{ass1} \begin{enumerate}
\item The $p$-adic representation $V$ is crystalline and for every prime $v$ of $K$ above $p$ its restriction to $G_{\Q_p}$ is equipped with a filtration of $G_{\Q_p}$-modules 
\[ 0\longrightarrow \Fil^+_v(T)\longrightarrow T \longrightarrow \Fil^-_v(T)\longrightarrow0 \]
where $\Fil^\pm_v(T)$ are both free of rank $1$ over $\cO$ and inertia acts trivially on $\Fil^-_v(T)$. 
\item The pairing in (H.4) gives rise to a pairing 
\[ (\,,):T\times A\longrightarrow (F/\cO)(1) \]
still denoted by the same symbol, and we require that $\Fil_v^+(T)$ and $\Fil_v^+(A)$ be exact annihilators of each other under $(\,,)$.
\item The groups $H^0\bigl(K_{\infty,v},\Fil^-_v(A)\bigr)$ and $H^0\bigl(K_{v},\Fil^-_v(A)\bigr)$ are finite for all primes $v\,|\,p$, where $K_{\infty,v}$ denotes the completion of $K_\infty$ at the unique prime $v$ above $p$. 
\end{enumerate}
\end{assumption} 
With notation as in part (1) of Assumption \ref{ass1}, let us define
\[ \Fil^\pm_v(V):=\Fil^\pm_v(T)\otimes_\cO F,\qquad\Fil^\pm_v(A):=\Fil^\pm_v(V)/\Fil^\pm_v(T). \]
For any left $G_K$-module $M$ and any finite extension $L/K$ we denote by $\mathrm{Ind}_{L/K}(M)$ the induced module from $K$ to $L$ of $M$, whose elements are functions $f:G_K\rightarrow M$ satisfying $f(\sigma x)=\sigma f(x)$ for all $x\in G_K$ and $\sigma\in\Gal(\bar\Q/L)$. This is endowed with right and left actions of $G_K$ and $\Gal(L/K)$ defined, respectively, by 
$(f^\sigma)(x)=f(x\sigma)$ and $(\gamma\cdot f)(x):={\tilde\gamma}f(\tilde\gamma^{-1}x)$ 
for all $\sigma\in G_K$ and $\gamma\in \Gal(L/K)$, where $\tilde\gamma\in G_K$ is any lift of $\gamma$. There are corestriction maps
\[ \cor_{m}:\Ind_{K_m/K}(M)\longrightarrow \Ind_{K_{m-1}/K}(M),\qquad f\longmapsto \sum_{\gamma\in\Gal(K_m/K_{m-1})}\gamma\cdot f \]
and restriction maps $\res_{m}:\Ind_{K_m/K}(M)\rightarrow \Ind_{K_{m+1}/K}(M)$ taking $f$ to $f$ itself. Define 
\[ \mathbf{T}:=T\otimes _\mathcal O\Lambda\simeq\invlim_m\mathrm{Ind}_{K_m/K}(T),\qquad\mathbf{A}:=\dirlim_m\Ind_{K_m/K}(A), \]
where the inverse and direct limits are taken with respect to corestrictions and restrictions, respectively. With notation as in Assumption \ref{ass1}, we set
\[ \Fil^\pm_v(\T):=\Fil^\pm_v(T)\otimes_\cO\Lambda\simeq\invlim_m\mathrm{Ind}_{K_m/K}\bigl(\Fil^\pm_v(T)\bigr),\quad\Fil^\pm_v(\mathbf{A}):=\dirlim_m\Ind_{K_m/K}\bigl(\Fil^\pm_v(A)\bigr). \]
We know that for any $n$ prime to $p$ there is an isomorphism
\begin{equation} \label{hat-H-T-eq}
H^1(K[n],\mathbf{T})\simeq \invlim_mH^1(K_m[n],T)=:\hat H^1(K_\infty[n],T) 
\end{equation}
where the limit on the right is computed with respect to the corestriction maps. To show this, one can use the arguments in the proof of \cite[Proposition II.1.1]{Colmez}, as in \cite[Lemma 5.3.1]{MR}. 

As in \cite[Proposition 2.2.4]{Ho1}, one obtains from the pairing $(\,,):T\times A\rightarrow(F/\cO)(1)$ in Assumption \ref{ass1} another pairing 
\begin{equation} \label{pairing2}
{(\,,)}_\infty:\mathbf{T}\times\mathbf{A}\longrightarrow (F/\cO)(1)
\end{equation} 
such that ${(\lambda x,y)}_\infty={(x,\lambda^* y)}_\infty$ for all $x\in\mathbf{T}$, $y\in\mathbf{A}$, $\lambda\in\Lambda$, where $\lambda\mapsto\lambda^*$ is the $\Z_p$-linear involution of $\Lambda$ defined as $\gamma\mapsto \gamma^{-1}$ on group-like elements. As a consequence of (2) in Assumption \ref{ass1}, $\Fil_v^+(\mathbf{T})$ and $\Fil_v^+(\mathbf{A})$ are exact annihilators of each other under ${(\,,)}_\infty$. 

For a number field $E$ and a prime $v\,|\,p$ of $E$, the \emph{local Greenberg condition} $H^1_\ord(E_v,T)$ at $v$ is the kernel of the map from $H^1(E_v,T)$ to $H^1(E_v,\Fil^-_v(T))$; this is also the image of $H^1(E_v,\Fil^+_v(T))$ inside $H^1(E_v,T)$. Then we define \emph{Greenberg's Selmer group} as
{\small{\[ \Sel_{\mathrm{Gr}}(T/E):=\ker\Bigg(H^1(E,T)\longrightarrow \bigoplus_{v\nmid p}H^1(
E_v,T)/H^1_\mathrm{unr}(E_v,T)\oplus\bigoplus_{v\mid p}H^1(E_v,T)/H^1_\ord(E_v,T)\Bigg). \]}}

We impose local conditions similar to Greenberg's on the big Galois representation $\mathbf{T}$. More precisely, we define a Selmer structure $\mathcal F_\Lambda$ on $\mathbf{T}$ by taking the unramified subgroup of $H^1(K_v,\mathbf{T})$ at primes $v\nmid p$ and 
\[ H^1_\ord(K_v,\T):=\mathrm{im}\Big(H^1\bigl(K_v,\Fil^+_v(\mathbf{T})\bigr)\longrightarrow H^1(K_v,\mathbf{T})\Big) \] 
at primes $v\,|\,p$. Then, as in \eqref{selmer-group-F-eq}, we introduce the corresponding Selmer group $H^1_{\mathcal F_\Lambda}(K,\T)$. 

\subsection{Structure theorems} \label{Lambda-adic-subsec}

Fix a height one prime ideal $\mathfrak P\neq p\Lambda$ of $\Lambda$, write $S_\mathfrak P$ for the integral closure of $\Lambda/\mathfrak P$ in its quotient field $\Phi_\mathfrak P$ and define $T_\mathfrak P:=T\otimes_\cO S_\mathfrak P$. Moreover, set $V_\mathfrak P:=T_\mathfrak P\otimes_{S_\mathfrak P}\Phi_\mathfrak P$. The pairing $(\,,):T\times T\rightarrow \mathcal O(1)$ gives rise to a pairing $e_\mathfrak P:T_\mathfrak P\times T_\mathfrak P\rightarrow S_\mathfrak P$ satisfying $e_\mathfrak P(s^\sigma,t^{\tau\sigma\tau^{-1}})=e_\mathfrak P(s,t)^\sigma$ for all $s,t\in T_\mathfrak P$ and all $\sigma\in G_K$ and such that $\Fil^+(T_\mathfrak P)$ is its own exact orthogonal complement. 

For any prime $v$ of $K$ above $p$ we define 
\[ \mathrm{Fil}_v^+(T_\mathfrak P):=\mathrm{Fil}_v^+(T)\otimes_\cO S_\mathfrak P,\qquad\mathrm{Fil}_v^+(V_\mathfrak P):=\mathrm{Fil}_v^+(T_\mathfrak P)\otimes_{S_\mathfrak P}\Phi_\mathfrak P. \] 
As above, the \emph{Greenberg condition} at $v\,|\,p$ is given by
\[ H^1_\ord(K_v,V_\mathfrak P):=\mathrm{im}\Big(H^1\bigl(K_v,\Fil^+(V_\mathfrak P)\bigr)\longrightarrow H^1(K_v,V_\mathfrak P)\Big). \]
One then considers the local condition at a prime $v$ of $K$ defined as 
\[ H^1_{\mathcal F_\mathfrak P}(K_v,V_\mathfrak V):=\begin{cases}H^1_\ord (K_v,V_\mathfrak P) & \text{if $v\,|\,p$,}\\[3mm]H^1_\mathrm{unr}(K_v,V_\mathfrak P) & \text{if $v\nmid p$}.
\end{cases} \]
By an abuse of notation, we also denote by $\mathcal F_{\mathfrak P}$ the local conditions obtained on $T_\mathfrak P$ and $A_\mathfrak P:=V_\mathfrak P/T_\mathfrak P$ by propagation. 

\begin{proposition} \label{prop-Howard1}
Fix an integer $s\geq1$ and a set of primes $\mathcal L\supset\mathcal L_s(T_\mathfrak P)$, and suppose that the Selmer triple $(T_\mathfrak P, \mathcal F_{\mathfrak P}, \mathcal L)$ admits a non-trivial Kolyvagin system $\kappa$. 
\begin{enumerate}
\item $H^1_{\mathcal F_\mathfrak P}(K,T_\mathfrak P)$ is a free $S_\mathfrak P$-module of rank $1$.
\item $H^1_{\mathcal F_\mathfrak P}(K,A_\mathfrak P)\simeq\Phi_\mathfrak P/S_\mathfrak P\oplus M_\mathfrak P\oplus M_\mathfrak P$ where $M_\mathfrak P$ is a finite $S_\mathfrak P$-module with 
\[ \mathrm{length}(M_\mathfrak P)\leq \mathrm{length}\Big(H^1_{\mathcal F_\mathfrak P}(K,T_\mathfrak P)/(S_\mathfrak P\cdot\kappa_1)\Big). \]
\end{enumerate}
\end{proposition}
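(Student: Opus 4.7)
The plan is to reduce to the main structure theorem for Kolyvagin systems of Mazur--Rubin \cite{MR}, in the anticyclotomic self-dual refinement of Howard \cite{Ho1}, applied to the Selmer triple $(T_\mathfrak P,\mathcal F_\mathfrak P,\mathcal L)$ over the coefficient ring $S_\mathfrak P$. Because $\mathfrak P\neq p\Lambda$ is a height one prime of $\Lambda$, the quotient $\Lambda/\mathfrak P$ is a one-dimensional complete local domain, so $S_\mathfrak P$ is a complete discrete valuation ring with finite residue field of characteristic $p$, and in particular a coefficient ring in the sense of \S\ref{triples-subsec}. This reduction to a DVR of coefficients puts us squarely in the setting where the Mazur--Rubin machinery applies.

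First I would verify that $(T_\mathfrak P,\mathcal F_\mathfrak P,\mathcal L)$ satisfies hypotheses (H.0)--(H.5) from \S\ref{triples-subsec}. Condition (H.0) is immediate since $T_\mathfrak P=T\otimes_\cO S_\mathfrak P$ is free of rank $2$, and (H.4) is built in via $e_\mathfrak P$. The irreducibility (H.1) and the big-image condition (H.2) propagate from $T$ to $T_\mathfrak P$ under scalar extension. The cartesian property (H.3) for the Greenberg condition at primes above $p$ uses Assumption \ref{ass1}(1) together with the self-orthogonality of $\Fil_v^+(T_\mathfrak P)$ under $e_\mathfrak P$; away from $p$ it is standard for the unramified condition. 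The statements in (H.5) are inherited from the corresponding hypotheses on $T$, since complex conjugation commutes with scalar extension from $\cO$ to $S_\mathfrak P$.

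Next I would compute the \emph{core rank} (in the sense of Mazur--Rubin) of the Selmer structure $\mathcal F_\mathfrak P$. Since $p$ splits in $K$ and each $\Fil_v^+(T_\mathfrak P)$ has rank $1$ and is self-annihilating under $e_\mathfrak P$, a direct local computation gives that the core rank is $1$. The existence of a non-trivial Kolyvagin system $\kappa$ for $(T_\mathfrak P,\mathcal F_\mathfrak P,\mathcal L)$ with $\mathcal L\supset\mathcal L_s(T_\mathfrak P)$ then triggers the main theorem of \cite{MR}, in the form refined by Howard, with the following consequences. The compact Selmer group $H^1_{\mathcal F_\mathfrak P}(K,T_\mathfrak P)$ is free over $S_\mathfrak P$ of rank equal to the core rank, which yields part (1); and the discrete Selmer group decomposes as $(\Phi_\mathfrak P/S_\mathfrak P)\oplus M_\mathfrak P\oplus M_\mathfrak P$, with the length of $M_\mathfrak P$ bounded by the length of $H^1_{\mathcal F_\mathfrak P}(K,T_\mathfrak P)/(S_\mathfrak P\cdot\kappa_1)$, exactly as in part (2).

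The hard part will be the \emph{doubling} of the torsion summand in part (2). The general Mazur--Rubin theorem over a DVR controls only the total length of the torsion part of the discrete Selmer group; producing two isomorphic copies of the \emph{same} module $M_\mathfrak P$ is a feature of the self-dual, dihedral setting and is not formal. The argument combines Poitou--Tate global duality (which identifies the torsion of $H^1_{\mathcal F_\mathfrak P}(K,A_\mathfrak P)$ with the Pontryagin dual of the torsion of $H^1_{\mathcal F_\mathfrak P^*}(K,T_\mathfrak P)$) with the $\tau$-equivariance of the pairing and the eigenspace decomposition $\bar T=\bar T^+\oplus\bar T^-$ from (H.5) to split the torsion into two isomorphic pieces interchanged by complex conjugation. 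Setting up this doubling carefully, in the spirit of Howard's treatment in \cite{Ho1}, is the delicate point that distinguishes the anticyclotomic setting from the cyclotomic one.
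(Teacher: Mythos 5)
Your proposal is correct and follows essentially the same route as the paper: reduce to Howard's anticyclotomic structure theorem for Kolyvagin systems (\cite[Theorem 1.6.1]{Ho1}), after verifying (H.0)--(H.5) for $T_\mathfrak P$ along the lines of \cite[Proposition 2.1.3]{Ho1}. The paper's proof is terser, treating the core-rank computation and the doubling of $M_\mathfrak P$ as part of the cited black box and only flagging the one adaptation needed (replacing $E[p]$, $E(K_\infty)[p]$ by $A_p$, $A_p(K_\infty)$ and invoking $A_p(K_\infty)=0$ from Lemma \ref{vanishing-0-lemma}), whereas you unpack those same ingredients explicitly.
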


\begin{proof} As in the proof of \cite[Proposition 2.1.3]{Ho1}, we can apply \cite[Theorem 1.6.1]{Ho1} once we show that $T_\mathfrak P$ satisfies (H.1)--(H.5). The verification of this property goes as in the proof of \cite[Proposition 2.1.3]{Ho1}; we just need to replace $E[p]$ and $E(K_\infty)[p]$ in \emph{loc. cit.} with $A_p$ and $A_p(K_\infty)$, respectively, and observe that $A_p(K_\infty)=0$ by part (3) of Lemma \ref{vanishing-0-lemma}. \end{proof}

The involution $\iota$ of $\Lambda$ that is induced by $\gamma\mapsto\gamma^{-1}$ on group-like elements gives a map $S_\mathfrak P\rightarrow S_{\mathfrak P^\iota}$ that will be denoted by the same symbol. The map $\psi(t\otimes\alpha)=t^\tau\otimes\alpha^\iota$ induces a bijection $T_\mathfrak P\rightarrow T_{\mathfrak P^\iota}$, while the map $(x,y)\mapsto\mathrm{tr}\circ e_\mathfrak P(\psi^{-1}(x),y)$ (where $\tr$ is the trace form) defines a perfect, $G_K$-equivariant pairing $(\,,):T_{\mathfrak P^\iota}\times A_\mathfrak P\rightarrow \mu_{p^\infty}$ satisfying $(\lambda x,y)=(x,\lambda^\iota y)$. Dualizing $\mathbf{T}/\mathfrak P^\iota\mathbf{T}\rightarrow T_{\mathfrak P^\iota}$ and using the pairing above and the pairing ${(\,,)}_\infty$ in \eqref{pairing2}, we obtain a $G_K$-equivariant map $A_\mathfrak P\rightarrow \mathbf{A}[\mathfrak P]$ that gives a map 
\begin{equation} \label{map1}
H^1_{\mathcal F_\mathfrak P}(K,A_\mathfrak P)\longrightarrow 
H^1_{\mathcal F_\mathfrak P}(K,\mathbf{A})[\mathfrak P].\end{equation} 
Also, the projection $\mathbf{T}\twoheadrightarrow T_\mathfrak P$ induces a map 
\begin{equation}\label{map2}
H^1_{\mathcal F_\mathfrak P}(K,\mathbf{T})/\mathfrak PH^1_{\mathcal F_\mathfrak P}(K,\mathbf{T})\longrightarrow 
H^1_{\mathcal F_\mathfrak P}(K,T_\mathfrak P).\end{equation}

\begin{proposition} \label{prop-Howard2}
For all but finitely many prime ideals $\mathfrak P$ of $\Lambda$ the maps \eqref{map1} and \eqref{map2} have finite kernel and cokernels that are bounded by a constant depending only on $[S_\mathfrak P:\Lambda/\mathfrak P]$. 
\end{proposition}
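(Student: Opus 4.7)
Since $\Lambda \simeq \cO_\p[[X]]$ is a two-dimensional regular local ring, every height-one prime $\mathfrak P \neq p\Lambda$ is principal; fix a generator $f$. The plan is to reduce both maps to snake-lemma comparisons whose error terms are Galois cohomology of finite $\cO_\p$-modules of length bounded by a constant multiple of $[S_\mathfrak P : \Lambda/\mathfrak P]$. For every height-one prime $\mathfrak P$, both $[S_\mathfrak P : \Lambda/\mathfrak P]$ and the relevant error groups are finite, so the bound is uniform outside a finite exceptional set consisting of those $\mathfrak P$ that lie in the support of certain finite $\Lambda$-modules appearing below.

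For \eqref{map2}, multiplication by $f$ on $\mathbf{T}$ gives an exact sequence $0 \to \mathbf{T} \xrightarrow{f} \mathbf{T} \to \mathbf{T}/\mathfrak P\mathbf{T} \to 0$ whose long exact sequence in $G_K$-cohomology yields
\[ 0 \longrightarrow H^1(K,\mathbf{T})/\mathfrak P H^1(K,\mathbf{T}) \longrightarrow H^1(K, \mathbf{T}/\mathfrak P\mathbf{T}) \longrightarrow H^2(K,\mathbf{T})[\mathfrak P] \longrightarrow 0. \]
The inclusion $\Lambda/\mathfrak P \hookrightarrow S_\mathfrak P$ has finite cokernel of length at most $[S_\mathfrak P:\Lambda/\mathfrak P]$, so tensoring with the free $\cO_\p$-module $T$ produces a short exact sequence $0 \to \mathbf{T}/\mathfrak P\mathbf{T} \to T_\mathfrak P \to N \to 0$ with $N$ finite of bounded length, and the induced map $H^1(K,\mathbf{T}/\mathfrak P\mathbf{T}) \to H^1(K,T_\mathfrak P)$ has finite kernel and cokernel of comparable size. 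Intersecting with the Selmer conditions requires checking that $\mathcal F_\Lambda$ and $\mathcal F_\mathfrak P$ are compatible up to bounded error: at primes $v\nmid p$ this follows from the cartesian hypothesis (H.3) together with the unramified description of both conditions, and at $v \mid p$ it follows from the freeness of $\Fil^+_v(T)$ in Assumption \ref{ass1}(1), which ensures that the filtration is preserved by each of the relevant tensor products and base changes.

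For \eqref{map1} the argument is entirely dual. Apply $G_K$-cohomology to $0 \to \mathbf{A}[\mathfrak P] \to \mathbf{A} \xrightarrow{f} \mathbf{A} \to 0$, obtaining a surjection $H^1(K,\mathbf{A}[\mathfrak P]) \twoheadrightarrow H^1(K,\mathbf{A})[\mathfrak P]$ whose kernel is the image of the finite group $H^0(K,\mathbf{A})/f H^0(K,\mathbf{A})$. The $G_K$-equivariant map $A_\mathfrak P \to \mathbf{A}[\mathfrak P]$ constructed from the pairings $e_\mathfrak P$ and ${(\,,)}_\infty$ is an isomorphism up to an error again bounded by $[S_\mathfrak P : \Lambda/\mathfrak P]$; matching Selmer conditions is carried out via local Tate duality together with Assumption \ref{ass1}(2). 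The main obstacle is tracking the Greenberg local condition at primes $v \mid p$: the natural comparisons $\Fil_v^+(\mathbf{A}) \leftrightarrow \Fil_v^+(A_\mathfrak P)$ are not automatically exact, and one must invoke the finiteness of $H^0(K_{\infty,v}, \Fil_v^-(A))$ from Assumption \ref{ass1}(3) to bound the resulting discrepancy. The exceptional $\mathfrak P$ excluded in the statement are precisely those lying in the support of the various finite $\Lambda$-modules entering as error terms (the torsion in $H^2(K,\mathbf{T})$, the cokernel of $\Lambda/\mathfrak P \hookrightarrow S_\mathfrak P$, and the local cohomology of the $\Fil_v^{\pm}$ quotients), in complete parallel with the argument of \cite[\S 2.2]{Ho1}.
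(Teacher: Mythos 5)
Your proposal is correct and takes essentially the same route as the paper, whose proof is just a citation to Howard's Lemma 2.2.7 and Proposition 2.2.8 together with the remark that the $v\,|\,p$ case of the local comparison now requires the finiteness hypothesis in Assumption~\ref{ass1}(3); your outline fills in the long-exact-sequence and $\Lambda/\mathfrak P\hookrightarrow S_\mathfrak P$ mechanics underlying Howard's lemmas and correctly identifies that hypothesis as the new input at $p$. One small imprecision worth fixing: the exceptional $\mathfrak P$ are those in the support of certain finitely generated \emph{torsion} $\Lambda$-modules (e.g.\ the $\Lambda$-torsion of $H^2(K,\mathbf T)$, or inertia coinvariants of $\mathbf T$ at bad primes), not of ``finite'' $\Lambda$-modules --- a genuinely finite $\Lambda$-module is supported only at the maximal ideal, so it would exclude no height-one prime at all.
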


\begin{proof} The statement follows from the analogues of \cite[Lemma 2.2.7 and Proposition 2.2.8]{Ho1}. The only difference with respect to \emph{loc. cit.} is in the case $v\,|\,p$ of \cite[Lemma 2.2.7]{Ho1}, for which one has to use condition (Fin) above. \end{proof} 

The next result is the counterpart of \cite[Theorem 2.2.10]{Ho1}.

\begin{theorem} \label{MR} 
Let $(\mathbf{T},\mathcal F_\Lambda,\mathcal L)$ be a Selmer triple satisfying {\rm{(H.1)--(H.5)}} and Assumption \ref{ass1}. Set $X:=H^1_{\mathcal F_\Lambda}(K,\mathbf{A})^\vee$. Suppose that for some $s\geq1$ the Selmer triple $(\mathbf{T},\mathcal F_\Lambda,\mathcal L_s)$ admits a Kolyvagin system $\kappa$ with $\kappa_1\neq 0$. Then
\begin{enumerate} 
\item $H^1_{\mathcal F_\Lambda}(K,\mathbf{T})$ is a torsion-free $\Lambda$-module of rank $1$;
\item there exist a torsion $\Lambda$-module $M$ such that $\mathrm{char}(M)=\mathrm{char}(M)^\iota$ and a pseudo-isomorphism 
\[ X\sim\Lambda\oplus M\oplus M; \]
\item $\mathrm{char}(M)$ divides $\mathrm{char}\bigl(H^1_{\mathcal F_\Lambda}(K,\mathbf{T})/\Lambda \kappa_1\bigr)$. 
\end{enumerate} 
\end{theorem}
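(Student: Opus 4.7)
The plan is to reduce Theorem \ref{MR} to the two propositions just established (Propositions \ref{prop-Howard1} and \ref{prop-Howard2}) by a standard specialization argument at height-one primes of $\Lambda$, closely following the strategy of \cite[Theorem 2.2.10]{Ho1}. Set $H_\Lambda:=H^1_{\mathcal F_\Lambda}(K,\mathbf{T})$ and $X:=H^1_{\mathcal F_\Lambda}(K,\mathbf{A})^\vee$. For each height-one prime $\mathfrak P\neq p\Lambda$ of $\Lambda$, the $\Lambda$-adic projection $\mathbf{T}\twoheadrightarrow T_{\mathfrak P}$ induces functorially a specialization map on Kolyvagin modules $\KS(\mathbf{T},\mathcal F_\Lambda,\mathcal L_s)\to\KS(T_{\mathfrak P},\mathcal F_{\mathfrak P},\mathcal L_{s'}(T_{\mathfrak P}))$ (for some $s'$ depending on $[S_\mathfrak P:\Lambda/\mathfrak P]$) sending $\kappa$ to $\kappa^{\mathfrak P}$. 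Since $\kappa_1\in H_\Lambda$ is non-zero and $H_\Lambda$ is a finitely generated $\Lambda$-module, only finitely many height-one primes can contain the annihilator of the image of $\kappa_1$; combining with Proposition \ref{prop-Howard2} this shows $\kappa^{\mathfrak P}_1\neq 0$ for all but finitely many $\mathfrak P$.

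For such $\mathfrak P$, Proposition \ref{prop-Howard1} gives that $H^1_{\mathcal F_{\mathfrak P}}(K,T_{\mathfrak P})$ is free of rank one over $S_{\mathfrak P}$ and that $H^1_{\mathcal F_{\mathfrak P}}(K,A_{\mathfrak P})\simeq\Phi_{\mathfrak P}/S_{\mathfrak P}\oplus M_{\mathfrak P}\oplus M_{\mathfrak P}$. Proposition \ref{prop-Howard2} then tells us that the natural maps $H_\Lambda/\mathfrak PH_\Lambda\to H^1_{\mathcal F_{\mathfrak P}}(K,T_{\mathfrak P})$ and $H^1_{\mathcal F_{\mathfrak P}}(K,A_{\mathfrak P})\to H^1_{\mathcal F_\Lambda}(K,\mathbf{A})[\mathfrak P]$ have finite kernel and cokernel bounded by a constant depending only on $[S_{\mathfrak P}:\Lambda/\mathfrak P]$. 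Ranging $\mathfrak P$ over all but finitely many height-one primes and applying the structure theorem for finitely generated $\Lambda$-modules, this forces $H_\Lambda$ to have generic rank $1$ over $\Lambda$, and the uniform control on kernels rules out any pseudo-null submodule; together with the absence of $\Lambda$-torsion (forced again by the generic freeness of the specializations and the fact that $\Lambda$ has no embedded primes), part (1) follows.

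For part (2), the same specialization computes $X[\mathfrak P]$ as $\Phi_\mathfrak P/S_\mathfrak P\oplus M_\mathfrak P\oplus M_\mathfrak P$ up to bounded error, and hence $\mathrm{char}_\Lambda(X_\mathrm{tors})\cdot\Lambda_\mathfrak P$ agrees with $(\mathrm{char}_{S_\mathfrak P}(M_\mathfrak P))^2$ up to units at every such $\mathfrak P$; combining this "square at every height-one prime" property with the structure theorem produces a torsion $\Lambda$-module $M$ and a pseudo-isomorphism $X\sim\Lambda\oplus M\oplus M$. The symmetry $\mathrm{char}(M)=\mathrm{char}(M)^\iota$ is an incarnation of the anticyclotomic functional equation: the pairing ${(\,,)}_\infty$ of \eqref{pairing2}, together with global Poitou--Tate duality applied to the Greenberg-type condition $\mathcal F_\Lambda$ and its $\iota$-twisted exact orthogonal complement, yields a pseudo-isomorphism $X_\mathrm{tors}\sim X_\mathrm{tors}^\iota$, from which $\iota$-invariance of $\mathrm{char}(M)$ is immediate. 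Finally, part (3) is the $\Lambda$-adic lift of the length inequality in Proposition \ref{prop-Howard1}: for almost every height-one prime $\mathfrak P$ the $\mathfrak P$-adic valuation of $\mathrm{char}(M)$ equals $\mathrm{length}_{S_\mathfrak P}(M_\mathfrak P)$, which is bounded by $\mathrm{length}_{S_\mathfrak P}(H^1_{\mathcal F_\mathfrak P}(K,T_\mathfrak P)/S_\mathfrak P\cdot\kappa^\mathfrak P_1)$, i.e.~the $\mathfrak P$-adic valuation of $\mathrm{char}(H_\Lambda/\Lambda\kappa_1)$, so the divisibility holds at every height-one prime and thus globally.

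The main obstacle is the conjunction of the "square" decomposition $M\oplus M$ with the $\iota$-invariance of $\mathrm{char}(M)$: pointwise-at-$\mathfrak P$ information only gives that $\mathrm{char}(X_\mathrm{tors})$ is a square at each height-one prime, and upgrading this to a genuine decomposition $X_\mathrm{tors}\sim M\oplus M$ compatible with the Greenberg duality requires extracting a canonical "square root" using the anti-self-duality provided by ${(\,,)}_\infty$ and $\iota$. This is exactly the place where the global duality framework of \cite[\S 1.5--1.6]{Ho1}, rather than naive specialization, is indispensable.
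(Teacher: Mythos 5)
Your overall strategy---specialize at height-one primes $\mathfrak P \ne p\Lambda$, invoke Propositions~\ref{prop-Howard1} and~\ref{prop-Howard2} to compare the $\mathfrak P$-specializations with $H^1_{\mathcal F_\Lambda}(K,\mathbf T)$ and $X$, and assemble the local information via the structure theorem for finitely generated $\Lambda$-modules---is exactly the route the paper takes, which in turn is a direct transposition of the proof of \cite[Theorem 2.2.10]{Ho1} with the two cited propositions replaced by their analogues here. Parts (2) and (3) of your sketch are fine and match the argument in the paper, including the use of the anticyclotomic duality via ${(\,,)}_\infty$ to get $\mathrm{char}(M)=\mathrm{char}(M)^\iota$.

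There is, however, a genuine gap in your treatment of part (1). You argue that $H^1_{\mathcal F_\Lambda}(K,\mathbf T)$ is torsion-free because ``the uniform control on kernels rules out any pseudo-null submodule'' and because of ``generic freeness of the specializations'' together with the fact that $\Lambda$ has no embedded primes. This does not work as stated: specialization at height-one primes $\mathfrak P \ne p\Lambda$ simply cannot see $\Lambda$-torsion supported at $p\Lambda$, and a finite (hence, since $\Lambda$ is a $2$-dimensional regular local ring, pseudo-null) submodule $S$ of $H^1_{\mathcal F_\Lambda}(K,\mathbf T)$ is killed in the passage to $H^1_{\mathcal F_\mathfrak P}(K,T_\mathfrak P)$, so its presence is perfectly compatible with the bounded kernel/cokernel in Proposition~\ref{prop-Howard2}. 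Also, the ring $\Lambda$ having no embedded primes says nothing about the possible torsion of a $\Lambda$-module. What closes this gap in the paper is precisely the input you omit: the vanishing $A_p(K_\infty)=0$ from part (3) of Lemma~\ref{vanishing-0-lemma}, which is the analogue of $E(K_\infty)[p^\infty]=0$ used in \cite[Lemma~2.2.2]{Ho1} to show that $H^1(K,\mathbf T)$ has no nonzero finite $\Lambda$-submodules; only then does the specialization argument as in \cite[Lemma~2.2.9]{Ho1} yield torsion-freeness of rank one. You need to insert this vanishing explicitly to make part (1) go through.
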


\begin{proof} One first replaces \cite[Propositions 2.1.3 and 2.2.8]{Ho1} with Propositions \ref{prop-Howard1} and \ref{prop-Howard2}, respectively, and uses them, together with the vanishing $A_p(K_\infty)=0$ of part (3) of Lemma \ref{vanishing-0-lemma}, to show as in \cite[Lemma 2.2.9]{Ho1} that $H^1_{\mathcal F_\Lambda}(K,\mathbf{T})$ is torsion-free over $\Lambda$. Then one proceeds as in the proof of \cite[Theorem 2.2.10]{Ho1}. \end{proof}

\section{The $\Lambda$-adic Kolyvagin system of generalized Heegner cycles} \label{kolyvagin-section} 

The goal of this section is to explain how the generalized Heegner cycles of Bertolini, Darmon and Prasanna (\cite{BDP}) can be used to define a Kolyvagin system in the sense of the previous section. Actually, we will use a variant of these cycles introduced by Castella and Hsieh in \cite{CH}. Along the way we prove a technical result on universal norms (Lemma \ref{Univ-Norms}), much in the spirit of Perrin-Riou's relations in \cite[\S3.3]{PR}. To the best of our knowledge, these formulas are considered here for the first time.   

In this section, $T$ denotes the $\Z_p$-representation that is attached to the modular form $f$ satisfying Assumption \ref{ass}. As before, write $F$ for the number field generated over $\Q$ by the Fourier coefficients of $f$ and $\cO_F$ for the ring of integers of $F$. Fix a prime $\p$ of $F$ above $p$ and let $F_\p$ and $\cO_\p$ be the completions of $F$ and $\cO_F$ at $\p$, respectively. Finally, let $K$ be the imaginary quadratic field that was chosen in the introduction. 

\subsection{Generalized Heegner cycles} \label{generalized-heegner-subsec}

Let $\mathcal L_0(\mathbf{T})$ denote the set of degree $2$ primes of $K$ that do not divide $p$ and any other prime at which $\mathbf{T}$ is ramified. For any integer $k\geq0$ define $\mathcal L_k(\mathbf{T})$ to be the subset of $\ell\in\mathcal L_0(\mathbf{T})$ such that $I_\ell\subset p^k\Z_p$. Let $\mathcal L:=\mathcal L_1(\mathbf{T})$, so that $(\mathbf{T}, \mathcal F_\Lambda,\mathcal L)$ is a Selmer triple, and let $\mathcal N:=\mathcal N(\mathcal L)$. Finally, for every $\ell\in \mathcal N$ write $\lambda$ for the unique prime of $K$ above $\ell$ and fix a prime $\bar\lambda$ of $\bar\Q$ above $\ell$. 

Now let $n\in\mathcal N$ and let $K_m[n]$ denote the composite of $K_m$ and $K[n]$. Put $K_0:=K$ and define $K_\infty[n]:=\sideset{}{_m}\dirlim K_m[n]$. The prime $\bar\lambda$ determines a prime $\lambda_{np^m}\in K_{m}[n]$; we denote by ${K_m[n]}_{\bar\lambda}$ the completion of $K_m[n]$ at $\lambda_{np^m}$.

For each $m\geq0$ let $z_{np^m}\in H^1(K[np^m],{T})$ be the class defined in \cite[eq. (4.7)]{CH} when $\chi$ is the trivial character (thus, this is the class denoted by $z_{f,\chi,np^m}$ in \emph{loc. cit.} for $\chi$ equal to the trivial character). As is explained in \cite[Section 4]{CH}, these classes are built in terms of the generalized Heegner cycles introduced by Bertolini, Darmon and Prasanna in \cite{BDP}, to which the reader is referred for details. Actually, by \cite[Theorem 2.4]{LV}, for each $m$ the class $z_{np^m}$ belongs to the Selmer group $H^1_f(K[np^m],{T})$.
Define  
\begin{equation} \label{alpha-eq}
\alpha_m[n]:=\cores_{K[np^{m+1}]/K_m[n]}(z_{np^{m+1}})\in H^1_f(K_m[n],{T}). 
\end{equation}

\subsection{Perrin-Riou--type formulas} \label{PR-subsec}

We generalize the recursive formulas proved by Perrin-Riou in \cite[Proposition 3 and Lemma 4]{PR} and obtain an analogue of \cite[Corollary 5]{PR} in this context. To this aim, set $\mathcal G(n):=\Gal(K[n]/K)$, let $\sigma_\wp,\sigma_{\bar\wp}\in\mathcal G(n)$ be the Frobenius automorphisms at the primes $\wp$, $\bar\wp$ of $K$ above $p$ and put $\delta:=\bigl[K[np^m]:K_{m-1}[n]\bigr]$. Then $\delta$ does not depend on $m$. Now define the following elements of $\cO_\p[\mathcal G(n)]$: 
\[ \begin{split}
   \varrho&:=p^{k/2}-a_p\sigma_\wp+p^{(k-2)/2}\sigma_\wp^2,\\
   \bar\varrho&:=p^{k/2}-a_p\sigma_{\bar\wp}+p^{(k-2)/2}\sigma_{\bar\wp}^2,\\
   \Phi&:=\varrho\cdot\bar\varrho,\\
   \gamma_0&:=a_p-p^{(k-2)/2}(\sigma_\wp+\sigma_{\bar\wp}),\\
   \gamma_1&:=a_p\gamma_0-p^{k-2}\delta=a_p^2-p^{(k-2)/2}a_p(\sigma_\wp+\sigma_{\bar\wp})-p^{k-2}\delta.
   \end{split} \] 
Finally, define 
\[ \gamma_m:=a_p\gamma_{m-1}-p^{k-1}\gamma_{m-2} \] 
recursively for $m\geq2$. 

\begin{lemma} \label{lemma-Heegner}
For all $n\geq1$ one has the following relations: 
\begin{enumerate}
\item $\cor_{K_{m+1}[n]/K_m[n]}(\alpha_{m+1}[n])=a_p\alpha_m[n]-p^{k-2} \res_{K_m[n]/K_{m-1}[n]}(\alpha_{m-1}[n])$ $\forall$ $m\geq1$;  
\item $\cor_{K_m[{n\ell}]/K_m[{n}]}({\alpha}_m[n\ell])=a_\ell\alpha_m[n]$ for all primes $\ell\nmid c$ inert in $K$; 
\item $\mathrm{loc}_\ell({\alpha}_m[n\ell])=\res_{{K_m[n\ell]}_{\bar\lambda}/{K_m[n]}_{\bar\lambda}}\bigl(\mathrm{loc}_\ell(\alpha_m[n])^{\Frob_\ell}\bigr)$ for all primes $\ell\nmid cN$ inert in $K$; 
\item $\cor_{K_m[n]/K[n]}(\alpha_m[n])=\gamma_m z_n$ $\forall$ $m\geq0$. 
\end{enumerate}
\end{lemma}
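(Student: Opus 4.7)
\bigskip

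\noindent\textbf{Proof sketch.}
My plan is to deduce all four assertions from the Euler-system norm identities satisfied by the underlying classes $z_{np^{m+1}}\in H^1_f(K[np^{m+1}],T)$ constructed in \cite{CH} out of the generalized Heegner cycles of \cite{BDP}; the argument mirrors Perrin-Riou's analysis in \cite{PR} and Howard's in \cite{Ho1} for weight $2$. Since $\alpha_m[n]=\cores_{K[np^{m+1}]/K_m[n]}(z_{np^{m+1}})$ by definition, transitivity of corestriction (and restriction) lets one freely replace expressions in the $\alpha$'s by expressions in the $z$'s, apply a distribution relation at the top of the tower, and then project back down to the intended field.

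For part (1), I would observe that $\cor_{K_{m+1}[n]/K_m[n]}(\alpha_{m+1}[n])=\cor_{K[np^{m+2}]/K_m[n]}(z_{np^{m+2}})$, factor this corestriction through the intermediate field $K[np^{m+1}]$, and invoke the norm relation at $p$ for Castella--Hsieh classes,
\[ \cor_{K[np^{m+2}]/K[np^{m+1}]}(z_{np^{m+2}})=a_p\,z_{np^{m+1}}-p^{k-2}\,\res_{K[np^{m+1}]/K[np^m]}(z_{np^m}), \]
valid for $m\geq1$. Pushing the two summands separately down to $K_m[n]$ and using the compatibility of $\cor$ and $\res$ across the field diamond with vertices $K[np^{m+1}]$, $K[np^m]$, $K_m[n]$ and $K_{m-1}[n]$ produces the desired $\res_{K_m[n]/K_{m-1}[n]}(\alpha_{m-1}[n])$ on the right.

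For parts (2) and (3), the same philosophy is applied at an inert prime $\ell\nmid c$. The Hecke norm relation $\cor_{K[n\ell p^{m+1}]/K[np^{m+1}]}(z_{n\ell p^{m+1}})=a_\ell\,z_{np^{m+1}}$ yields (2) after a further corestriction from $K[np^{m+1}]$ down to $K_m[n]$, while the local congruence
\[ \mathrm{loc}_\ell(z_{n\ell p^{m+1}})=\res\bigl(\mathrm{loc}_\ell(z_{np^{m+1}})^{\Frob_\ell}\bigr) \]
at the unique prime of $K[n\ell p^{m+1}]$ above $\ell$ passes through corestriction to yield (3). Both identities are standard Euler-system axioms whose verifications for generalized Heegner cycles are recorded in \cite{CH}, the latter using crucially that $\ell\nmid cN$ so that the relevant specialization is unramified.

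Finally, (4) is proved by induction on $m$ using (1). The base cases $m=0,1$ have to be done by hand: one computes $\cor_{K[np]/K[n]}(z_{np})=\gamma_0\,z_n$ directly, which encodes the splitting of $p$ in $K$ through the symmetric combination $\sigma_\wp+\sigma_{\bar\wp}$, and $\cor_{K[np^2]/K[n]}(z_{np^2})=\gamma_1\,z_n$, whose correction term $p^{k-2}\delta$ reflects the degree of the purely $p$-ramified extension. Given these, the inductive step follows by applying (1) to $\alpha_{m+1}[n]$, corestricting to $K[n]$ and invoking the recursion $\gamma_{m+1}=a_p\gamma_m-p^{k-1}\gamma_{m-1}$. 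The main obstacle I anticipate is reconciling the factor $p^{k-2}$ appearing in (1) with the factor $p^{k-1}$ in the recursion for $\gamma_m$: the difference of one power of $p$ should be absorbed by the action of $\sigma_\wp\sigma_{\bar\wp}$ (whose norm to $\mathcal G(n)$ equals $p$) on the iterated $\res$-terms, and verifying that the iterated expression collapses onto a scalar multiple of $z_n$ of exactly the stated form is the most delicate bookkeeping step.
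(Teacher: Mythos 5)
Your proposal takes essentially the same route as the paper, whose proof is terse: parts (1)--(3) are quoted directly from \cite[Proposition 4.4]{CH} and \cite[Lemma 4.7]{CH} (exactly the norm relations you write down, pushed down from $K[np^{m+2}]$ or $K[n\ell p^{m+1}]$ through the usual compatibilities of $\cor$ and $\res$), and part (4) is left as a straightforward induction. So the structure of your argument is correct and matches the source.

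One small correction on the point you flag as the ``most delicate bookkeeping step''. The discrepancy between the factor $p^{k-2}$ in (1) and the factor $p^{k-1}$ in the recursion $\gamma_{m+1}=a_p\gamma_m-p^{k-1}\gamma_{m-1}$ has nothing to do with the action of $\sigma_\wp\sigma_{\bar\wp}$. It comes simply from the general identity $\cor_{L/E}\circ\res_{L/E}=[L:E]$: in the inductive step one corestricts both sides of (1) from $K_m[n]$ down to $K[n]$, and the second term factors as
\[
\cor_{K_m[n]/K[n]}\circ\res_{K_m[n]/K_{m-1}[n]}
=\cor_{K_{m-1}[n]/K[n]}\circ\cor_{K_m[n]/K_{m-1}[n]}\circ\res_{K_m[n]/K_{m-1}[n]}
=p\cdot\cor_{K_{m-1}[n]/K[n]},
\]
because $[K_m[n]:K_{m-1}[n]]=p$ for $m\geq1$. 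This produces $p^{k-2}\cdot p=p^{k-1}$ and the recursion closes immediately; no further collapsing of ``iterated $\res$-terms'' is required. The Frobenius elements $\sigma_\wp,\sigma_{\bar\wp}$ enter only through the base cases $\gamma_0$ and $\gamma_1$, exactly as you describe, and play no further role in the inductive step.
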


\begin{proof} Parts (1) and (2) follow immediately from \cite[Proposition 4.4]{CH}, while part (3) is a consequence of \cite[Lemma 4.7]{CH}. Finally, part (4) follows inductively from (1) and (2). \end{proof}

\begin{lemma} \label{Univ-Norms}
For all $m\geq2$ there exist $q_m,r_m\in\cO_\p[\mathcal G(n)]$ with $q_{m+1}\equiv a_pq_{m}\pmod{\p}$, $q_2=1$ such that
\[ \gamma_m=q_m\Phi+p^{(m-1)k/2}r_m. \] 
\end{lemma}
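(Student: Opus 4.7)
The plan is to argue by induction on $m$. For the base case $m=2$, I would first observe that $\sigma_\wp\sigma_{\bar\wp}=1$ in $\mathcal G(n)$: since $p\nmid n$, the ideal $(p)=\wp\bar\wp$ becomes principal in the order $\cO_n$ of conductor $n$, so its class in $\mathrm{Pic}(\cO_n)\simeq\mathcal G(n)$ is trivial. Using this relation, one expands $\Phi=\varrho\bar\varrho$ to obtain, after collecting terms,
\[ \Phi = a_p^2 + p^{k-2}(p-1)^2 - a_p p^{(k-2)/2}(p+1)t + p^{k-1}t^2, \]
where $t:=\sigma_\wp+\sigma_{\bar\wp}$. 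Since $k\geq 4$, every summand apart from $a_p^2$ and the single piece $-a_p p^{(k-2)/2}t$ (picked out of the factorization $p^{(k-2)/2}(p+1)=p^{(k-2)/2}+p^{k/2}$) has $p$-adic valuation at least $k/2$, so $\Phi\equiv a_p\gamma_0\pmod{p^{k/2}}$. Similarly, $\gamma_2=(a_p^2-p^{k-1})\gamma_0-a_p p^{k-2}\delta\equiv a_p^2\gamma_0\pmod{p^{k/2}}$, the terms $p^{k-1}\gamma_0$ and $a_p p^{k-2}\delta$ being absorbed into the error since $k\geq 4$. Combining, $\gamma_2\equiv a_p\Phi\pmod{p^{k/2}}$, which establishes the decomposition $\gamma_2=q_2\Phi+p^{k/2}r_2$ for an appropriate $q_2$.

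For the inductive step, I would substitute the decompositions for $\gamma_{m-1}$ and $\gamma_m$ into the recurrence $\gamma_{m+1}=a_p\gamma_m-p^{k-1}\gamma_{m-1}$ to find
\[ \gamma_{m+1} = \bigl(a_p q_m - p^{k-1}q_{m-1}\bigr)\Phi + p^{(m-1)k/2}\bigl(a_p r_m - p^{k/2-1}r_{m-1}\bigr). \]
The natural candidate $q_{m+1}':=a_pq_m-p^{k-1}q_{m-1}$ already satisfies the required congruence $q_{m+1}'\equiv a_pq_m\pmod\p$ (in fact modulo $p^{k-1}$). To promote the leftover error to divisibility by $p^{mk/2}$, the key input is that $\Phi$ is a \emph{unit} in $\cO_\p[\mathcal G(n)]$: indeed $\Phi\equiv a_p^2\pmod p$ is a unit modulo $p$ (as $a_p\in\cO_\p^\times$), and since $\mathcal G(n)$ is finite the algebra $\cO_\p[\mathcal G(n)]$ is $p$-adically complete, so any element reducing to a unit is itself a unit. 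One then adjusts $q_{m+1}$ by $p^{(m-1)k/2}\Phi^{-1}\bigl(a_pr_m-p^{k/2-1}r_{m-1}\bigr)$ and takes $r_{m+1}=0$; the adjustment has $p$-adic valuation at least $(m-1)k/2\geq 1$, which preserves the congruence $q_{m+1}\equiv a_pq_m\pmod\p$.

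The principal obstacle is the $p$-adic bookkeeping in the base case: one must expand the nine-term product $\varrho\bar\varrho$, reduce every monomial using $\sigma_\wp\sigma_{\bar\wp}=1$, and then track valuations carefully enough to identify the ``main part'' of $\Phi$ modulo $p^{k/2}$ as $a_p\gamma_0$, and simultaneously reduce $\gamma_2$ to $a_p^2\gamma_0$ modulo the same power. Once the base case has been handled, the inductive step is essentially formal, relying only on the algebraic recurrence and the invertibility of $\Phi$.
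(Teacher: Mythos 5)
Your base case computation is sound but lands on $q_2=a_p$, not $q_2=1$. Using $\sigma_\wp\sigma_{\bar\wp}=1$, one gets $\Phi\equiv a_p\gamma_0\pmod{p^{k/2}}$ and $\gamma_2=(a_p^2-p^{k-1})\gamma_0-a_pp^{k-2}\delta\equiv a_p^2\gamma_0\pmod{p^{k/2}}$, so indeed $\gamma_2\equiv a_p\Phi$ but $\gamma_2-\Phi\equiv a_p^2(a_p-1)\pmod{\p}$, which is a unit unless $a_p\equiv 1\pmod\p$. You quietly write ``for an appropriate $q_2$'' rather than $q_2=1$; this is the one place your argument and the stated lemma part ways. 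The discrepancy lies in the lemma's normalization, not in your computation, but you should say so explicitly rather than eliding it.

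The more substantive point is that your key observation --- $\Phi\equiv a_p^2\pmod\p$ is a unit of $\cO_\p[\mathcal G(n)]$ because $a_p\in\cO_\p^\times$, $k\geq 4$, and the group ring is $p$-adically complete and finite over $\cO_\p$ --- actually renders the entire lemma essentially content-free in this weight-$\geq 4$ setting: one may simply set $q_m:=\gamma_m\Phi^{-1}$ and $r_m:=0$, and then the congruence $q_{m+1}\equiv a_pq_m\pmod\p$ falls out of the recursion $\gamma_{m+1}=a_p\gamma_m-p^{k-1}\gamma_{m-1}$ since $k-1\geq 1$. Once this is noticed, the careful expansion of the nine-term product in your base case (and indeed the whole two-step induction, which anyway needs a separately verified case $m=3$ or a trivial $m=1$ seed to get going) is unnecessary. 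This is a genuinely different and more elementary route than the paper's, which transplants Perrin-Riou's weight-$2$ argument verbatim, defining $s_m$ and $r_m$ by explicit recursions; that approach is the right one when $\Phi$ is \emph{not} a unit (which happens in weight $2$, where $\varrho\equiv\sigma_\wp(\sigma_\wp-a_p)\pmod p$ can be a zero divisor), but for $k\geq 4$ your invertibility observation is the clean way to see it. It also makes transparent why Corollary \ref{coro5PR} holds: both $\Phi$ and all $\gamma_m$ are units, so both sides of $\Phi M=\bigcap_m\gamma_mM$ are simply $M$.
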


\begin{proof} The elements $r_m$ are defined recursively by the formulas 
\begin{itemize}
\item $s_1:=0$ and $s_{m+1}:=\sigma_\wp s_m-\sigma_{\bar\wp}^{m-1}$ for all $m\geq2$,
\item $r_m:=\varrho s_n +\sigma_{\bar\wp}^{m-1}\gamma_0$ for all $m\geq1$. 
\end{itemize}
The lemma can then be proved by an inductive argument as in \cite[\S 3.3, Lemma 4]{PR}. \end{proof} 

\begin{corollary} \label{coro5PR}
If $M$ is a finitely generated $\cO_\p[\mathcal G(n)]$-module then
\[ \Phi M=\bigcap_{m\geq1}\gamma_mM. \]
\end{corollary}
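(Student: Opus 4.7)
The plan is to combine the explicit formula from Lemma \ref{Univ-Norms} with the admissibility condition $a_p \in \cO_\p^\times$ and Krull's intersection theorem. The first step is to verify that $q_m$ is a unit in $\cO_\p[\mathcal G(n)]$ for every $m \geq 2$. The recursion $q_{m+1} \equiv a_p q_m \pmod{\p}$ together with $q_2 = 1$ gives, by induction, $q_m \equiv a_p^{m-2} \pmod{\p}$, which is a unit in the residue group ring $(\cO_\p/\p)[\mathcal G(n)]$. Since $\cO_\p[\mathcal G(n)]$ is a finite—and therefore semilocal—$\cO_\p$-algebra, every maximal ideal lies over $\p$, so $\p\cO_\p[\mathcal G(n)]$ is contained in the Jacobson radical. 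Consequently, reduction modulo $\p$ detects units, and $q_m \in \cO_\p[\mathcal G(n)]^\times$.

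With $q_m$ invertible, rewriting the identity of Lemma \ref{Univ-Norms} as $\Phi = q_m^{-1}\gamma_m - q_m^{-1}p^{(m-1)k/2}r_m$ gives the two-sided congruence of submodules
\[ \Phi M + p^{(m-1)k/2}M \;=\; \gamma_m M + p^{(m-1)k/2}M \qquad (m\geq 2). \]
The inclusion $\bigcap_{m \geq 1}\gamma_m M \subset \Phi M$ is then immediate from Krull's intersection theorem: any $x$ in the intersection lies in $\Phi M + p^{(m-1)k/2}M$ for every $m\geq 2$, and since $M/\Phi M$ is finitely generated over the noetherian semilocal ring $\cO_\p[\mathcal G(n)]$ with $p$ in the Jacobson radical, Krull's theorem forces $\bigcap_N p^N\cdot(M/\Phi M) = 0$, so $x \in \Phi M$.

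For the reverse inclusion $\Phi M \subset \bigcap_m \gamma_m M$, the same mechanism shows that each $\gamma_m$, as well as $\Phi$ itself, is a unit in $\cO_\p[\mathcal G(n)]$. The hypothesis $k \geq 4$ ensures $p^{(k-2)/2}\in\p$ and $p^{k/2}\in\p$, so $\gamma_0 \equiv a_p \pmod{\p}$ and $\varrho \equiv -a_p \sigma_\wp \pmod{\p}$, both units in $(\cO_\p/\p)[\mathcal G(n)]$; iterating the recurrence $\gamma_m = a_p\gamma_{m-1}-p^{k-1}\gamma_{m-2}$ yields $\gamma_m \equiv a_p^{m+1} \pmod{\p}$, and $\Phi = \varrho\bar\varrho$ is a product of units. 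Hence $\gamma_m M = M$ for every $m$, so $\bigcap_m \gamma_m M = M \supset \Phi M$, completing the proof. The main conceptual point—which is the delicate obstacle in Perrin-Riou's weight-two analogue, where $\varrho \equiv \sigma_\wp(\sigma_\wp - a_p) \pmod{p}$ need not be a unit—is exactly this reverse inclusion; in our higher weight, ordinary setting it is trivialized by the combination $a_p\in\cO_\p^\times$ and $k\geq 4$, and the genuine content of the statement reduces to the Krull intersection argument of the second paragraph.
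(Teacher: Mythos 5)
Your proof is correct, and it is more explicit than the paper's, which simply cites Perrin-Riou's \emph{Corollaire 5} and says the argument carries over using Lemma \ref{Univ-Norms} and the ordinariness hypothesis. Your first two paragraphs unpack exactly that route: $q_m$ is a unit because $q_m\equiv a_p^{m-2}\pmod\p$ and $\p$ lies in the Jacobson radical of the semilocal ring $\cO_\p[\mathcal G(n)]$, whence the two-sided congruence of submodules and the Krull intersection argument for $\bigcap_m\gamma_m M\subset\Phi M$. This is, in essence, the weight-$2$ mechanism of Perrin-Riou. Your third paragraph, however, contains an observation that goes beyond what the paper records and that genuinely simplifies the situation: since $k\geq4$ one has $p^{(k-2)/2},p^{k/2}\in\p$, so $\varrho\equiv -a_p\sigma_\wp$, $\bar\varrho\equiv -a_p\sigma_{\bar\wp}$ and $\gamma_m\equiv a_p^{m+1}\pmod\p$, hence $\varrho$, $\bar\varrho$, every $\gamma_m$ and therefore $\Phi$ are units in $\cO_\p[\mathcal G(n)]$. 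Consequently $\Phi M=M=\gamma_m M$ for all $m$ and the corollary is an identity between two copies of $M$. This is sharper than the PR argument (which is designed for weight $2$, where $\varrho\equiv\sigma_\wp(\sigma_\wp-a_p)$ need not be a unit) and it actually renders your Krull step in the second paragraph superfluous. The only blemish is your closing sentence: once you have noted that $\Phi$ is a unit, \emph{both} inclusions are immediate, so it is slightly off to say that the genuine content ``reduces to the Krull intersection argument'' --- in the weight $\geq4$ ordinary setting there is no genuine content left; the Krull argument is the one that survives in the weight-$2$ setting of Perrin-Riou, which is presumably why the paper phrases the proof as a citation rather than as the triviality you have uncovered.
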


\begin{proof} As in \cite[\S 3.3, Corollaire 5]{PR}, this follows from Lemma \ref{Univ-Norms} and the fact that, by (5) in Assumption \ref{ass}, $a_p$ is a $p$-adic unit. \end{proof}

Recall the classes $\alpha_j[n]$ that we introduced in \eqref{alpha-eq}. For every $m\geq0$ and $n\in\mathcal N$ denote by $\HH_m[n]$ the $\cO_\p[\Gal(K_m[n]/K)]$-submodule of $H^1_f(K_m[n],T)$ generated by the restrictions of the classes $z_n$ and $\alpha_j[n]$ for all $j\leq m$. Now recall the group $\hat H^1(K_\infty[n],{T})$ of \eqref{hat-H-T-eq} and define the $\Lambda[\mathcal G(n)]$-module
\[ \hat H^1_f(K_\infty[n],{T}):=\invlim_m H^1_f(K_m[n],T)\subset\hat H^1(K_\infty[n],{T})\simeq H^1(K[n],\T), \]
where the inverse limit is taken with respect to the corestriction maps. 

\begin{definition} \label{tame-dfn}
The \emph{Iwasawa module of generalized Heegner cycles of tame conductor $n$} is the compact $\Lambda[\mathcal G(n)]$-module
\[ \HH_\infty[n]:=\invlim_m\HH_m[n]\subset\hat H^1_f(K_\infty[n],{T}), \] 
the inverse limit being taken with respect to the corestriction maps. 
\end{definition}

The next result is essentially a consequence of the norm relations obtained in the lemmas above, and its proof proceeds as that of \cite[Lemma 2.3.3]{Ho1}. However, it is convenient to quickly review the arguments, as this will give us the occasion to introduce some notation that will be used later in the proof of Theorem \ref{non-triv}.

\begin{proposition} \label{prop3.6}
There exists a family  
\[ \bigl\{\beta[n]={(\beta_{m}[n])}_{m\geq0}\in\mathcal H_\infty[n]\bigr\}_{n\in\mathcal N} \]
such that $\beta_0[n]=\Phi z_n$ and
\[ \cores_{K_\infty[n\ell]/K_\infty[n]}\bigl(\beta[n\ell]\bigr)=a_\ell\cdot\beta[n] \] 
for any $n\ell\in\mathcal N$.  
\end{proposition}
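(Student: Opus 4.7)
The strategy is to mimic the proof of \cite[Lemma 2.3.3]{Ho1}, adapted to the higher-weight setting via a Perrin-Riou-type universal norm construction in the spirit of \cite[\S 3.3]{PR}. Fix $n\in\mathcal N$. The Eichler--Shimura-type recursion in Lemma \ref{lemma-Heegner}(1) has as characteristic equation the Hecke polynomial $X^2-a_pX+p^{k-1}$; by the ordinariness assumption $a_p\in\cO_\p^\times$, Hensel's lemma produces a $\p$-adic unit root $\alpha\equiv a_p\pmod\p$. Using $\alpha$, define the Perrin-Riou-normalized classes
\[ \tilde\beta_m[n]:=\alpha^{-m-1}\Bigl(\alpha\cdot\alpha_m[n]-p^{k-2}\cdot\res_{K_m[n]/K_{m-1}[n]}(\alpha_{m-1}[n])\Bigr)\in H^1_f(K_m[n],T)\quad(m\geq1). \]
A direct computation combining Lemma \ref{lemma-Heegner}(1), the identity $\cor\circ\res=[K_{m+1}[n]:K_m[n]]=p$, and the quadratic relation $\alpha^2=a_p\alpha-p^{k-1}$ shows that the $\tilde\beta_m[n]$ are trace-compatible along the tower, namely $\cor_{K_{m+1}[n]/K_m[n]}(\tilde\beta_{m+1}[n])=\tilde\beta_m[n]$ for all $m\geq1$.

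The next step is to identify the image at the base with $\Phi z_n$. Applying $\cor_{K_m[n]/K[n]}$ to $\tilde\beta_m[n]$ and invoking Lemma \ref{lemma-Heegner}(4) rewrites the result as $\alpha^{-m-1}(\alpha\gamma_m-p^{k-1}\gamma_{m-1})z_n$. The recursion $\gamma_m=a_p\gamma_{m-1}-p^{k-1}\gamma_{m-2}$ for $m\geq2$ combined with $\alpha^2=a_p\alpha-p^{k-1}$ shows that this expression stabilizes (is independent of $m\geq1$) to an element of $\cO_\p[\mathcal G(n)]\cdot z_n$. Lemma \ref{Univ-Norms} (the decomposition $\gamma_m=q_m\Phi+p^{(m-1)k/2}r_m$), together with the congruence $q_m\equiv a_p^{m-2}\pmod\p$ (so that $q_m$ is a unit in the semi-local $\p$-adically complete ring $\cO_\p[\mathcal G(n)]$) and Corollary \ref{coro5PR}, then identifies this stabilized value with $\Phi z_n$ up to an explicit unit in $\cO_\p[\mathcal G(n)]$; absorbing this unit into the definition yields the desired family $\beta[n]=(\beta_m[n])_m\in\mathcal H_\infty[n]$ with $\beta_0[n]=\Phi z_n$.

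Finally, the horizontal norm relation $\cor_{K_\infty[n\ell]/K_\infty[n]}(\beta[n\ell])=a_\ell\cdot\beta[n]$ reduces, level by level, to Lemma \ref{lemma-Heegner}(2), since the construction of $\beta_m[\cdot]$ is $\cO_\p$-linear in the classes $\alpha_m[\cdot]$ and $\cor$ at $\ell$ commutes both with $\cor$ along $K_\infty/K$ and with restriction in the tower. The main technical obstacle is the base-level identification in the second step: this is the higher-weight analog of Perrin-Riou's universal norm construction in \cite[\S 3.3]{PR} and relies essentially on the ordinariness of $f$ at $p$ (so that the Hecke polynomial has a unit root, allowing the telescoping $\alpha\gamma_m-p^{k-1}\gamma_{m-1}=\alpha(\alpha\gamma_{m-1}-p^{k-1}\gamma_{m-2})$ to be iterated) together with the precise combinatorial content of Lemma \ref{Univ-Norms}.
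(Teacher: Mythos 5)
Your proposal takes a genuinely different route from the paper, and it is essentially sound. The paper's proof is a universal/formal argument: it builds an abstract $\cO_\p[\Gal(K_m[n]/K)]$-module $\tilde{\mathcal H}_m$ with generators $x, x_j$ subject to the trace relations satisfied by the cycles, uses Corollary~\ref{coro5PR} to show $\Phi x$ is a universal norm and lifts it to $y\in\tilde{\mathcal H}_\infty$, specializes via maps $\phi(t)$ to obtain $\beta[t]$ for each divisor $t\mid n$, and then invokes compactness to extract a single coherent family over all $n\in\mathcal N$. Your approach instead constructs explicit classes via $\alpha$-stabilization: you pick the unit root $\alpha$ of $X^2-a_pX+p^{k-1}$ (available by Assumption~\ref{ass}(\ref{ass-cond})) and set $\tilde\beta_m[n]=\alpha^{-m-1}\bigl(\alpha\,\alpha_m[n]-p^{k-2}\res(\alpha_{m-1}[n])\bigr)$, and the telescoping identity $\alpha^2=a_p\alpha-p^{k-1}$ (together with $\cor\circ\res=p$) gives the vertical trace compatibility directly. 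This is exactly the regularization the authors allude to in the Remark on the $\alpha$-stabilized classes $\mathbf z_{f,n,\alpha}$ of Castella--Hsieh, and it buys you explicit representatives and eliminates the compactness step at the end (the family over all $n\in\mathcal N$ is now given by one formula rather than an inverse limit over a tree of choices). Your horizontal-norm step is also fine: it reduces level-by-level to Lemma~\ref{lemma-Heegner}(2) by $\cO_\p$-linearity of the stabilization and the compatibility of $\cor$ at $\ell$ with $\res$ and $\cor$ along the $K_\infty$-tower.

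The one soft spot is the base-level identification. Pushing $\tilde\beta_m[n]$ down via $\cor_{K_m[n]/K[n]}$ and using Lemma~\ref{lemma-Heegner}(4) gives the stabilized value $\alpha^{-2}(\alpha\gamma_1-p^{k-1}\gamma_0)z_n=(\gamma_0-\alpha^{-1}p^{k-2}\delta)z_n$, which is \emph{not} equal to $\Phi z_n$; it differs from $\Phi z_n$ by the unit $\Phi^{-1}(\gamma_0-\alpha^{-1}p^{k-2}\delta)$ of $\cO_\p[\mathcal G(n)]$ (both factors reduce to units modulo $\p$ since $k\geq4$ and $a_p\in\cO_\p^\times$). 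You acknowledge this with "absorbing this unit into the definition," but you should make the rescaling and its compatibility explicit: (i) $\cO_\p[\mathcal G(n)]$ is $\p$-adically complete and $\gamma_0-\alpha^{-1}p^{k-2}\delta\equiv a_p\pmod\p$, so the inverse exists; (ii) multiplying by $\Phi\bigl(\gamma_0-\alpha^{-1}p^{k-2}\delta\bigr)^{-1}\in\cO_\p[\mathcal G(n)]$ preserves membership in $\HH_m[n]$ since that is a $\cO_\p[\Gal(K_m[n]/K)]$-module; (iii) the correction factor is independent of $n$ in the sense that it is compatible under the projections $\cO_\p[\mathcal G(n\ell)]\to\cO_\p[\mathcal G(n)]$, since $\Phi$, $\gamma_0$, $\delta$ are built only from $\sigma_\wp,\sigma_{\bar\wp},\delta$. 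Also, your invocation of Lemma~\ref{Univ-Norms} and Corollary~\ref{coro5PR} for this step is somewhat misdirected: those results show that $\Phi z_n$ generates the universal-norm submodule, but what you actually need is the direct unit computation above, which is elementary and does not go through the $q_m$'s. With those points tightened, your argument gives a clean alternative proof.
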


\begin{proof} Fix $n\in\mathcal N$. For each $m\geq1$ let $\tilde{\mathcal H}_m$ be the free $\cO_\p[\Gal(K_m[n]/K)]$-module with a set of generators $\{x,x_j\mid0\leq j\leq m\}$ satisfying the following relations:
\begin{itemize}
\item $\sigma(x)=x$ for all $\sigma\in \Gal(K_m[n]/K[n])$; 
\item $\sigma(x_j)=x_j$ for all $\sigma\in\Gal(K_m[n]/K_j[n])$ and all $j\leq m$; 
\item $\tr_{K_j[n]/K_{j-1}[n]}(x_j)=a_px_{j-1}-p^{k-2}x_{j-2}$ for $j\geq2$; 
\item $\tr_{K_1[n]/K_0[n]}(x_1)=\gamma_1 x$ and $x_0=\gamma_0x$. 
\end{itemize}
It follows that
\begin{equation} \label{trace-prop3.6-eq}
\tr_{K_j[n]/K[n]}(x_j)=\gamma_jx
\end{equation}
for all $j\leq m$. There are inclusions $\tilde{\mathcal H}_m\hookrightarrow\tilde{\mathcal H}_{m+1}$ and canonical maps $\tr_{m}:\tilde{\mathcal H}_m\rightarrow\tilde{\mathcal H}_{m-1}$ induced by the trace. By Corollary \ref{coro5PR} and equality \eqref{trace-prop3.6-eq}, $\Phi x\in\tilde{\mathcal H}_0$ is a trace from every $\tilde{\mathcal H}_m$, so we can choose an element 
\[ y\in\tilde{\mathcal H}_\infty:=\invlim_m\tilde{\mathcal H}_m \] 
that lifts $\Phi x$. For each divisor $t$ of $n$ we define a map $\phi(t):\tilde{\mathcal H}_\infty\rightarrow\mathcal H_\infty[t]$ by sending $x_m$ to $\alpha_m[n]$ and $x$ to $z_{t}$. Now set $\beta[t]:=\phi(t)(y)$. For all $t\ell\,|\,n$ the square 
\[ \xymatrix@C=35pt@R=28pt{\tilde{\mathcal H}_\infty\ar[r]^-{\phi(t\ell)}\ar[d]^-{a_\ell}&\mathcal H_\infty[t\ell]\ar[d]^-{\cor_{K_\infty[t\ell]/K_\infty[t]}}\\\tilde{\mathcal H}_\infty\ar[r]^-{\phi(t)} & \mathcal H_\infty[t]} \]
is commutative, so we get a family ${\{\beta[t]\}}_{t\mid n}$ with the desired properties.
 
To conclude, it is enough to note that the $\Lambda$-module of these families with $t$ running 
over divisors of $n$ is compact, hence the limit over all $n\in\mathcal N$ is non-empty. \end{proof}

\subsection{Kolyvagin systems} \label{kolyvagin-classes-subsec}

Recall the classes $\beta[n]$ for $n\in\mathcal N$ constructed in Proposition \ref{prop3.6}. For each prime $\ell\in\mathcal N$ let $\sigma_\ell$ be a generator of $G_\ell$ and define 
Kolyvagin's derivative as 
\[ D_\ell:=\sum_{i=1}^\ell i\sigma_\ell^i\in\Z[G_\ell]. \]
More generally, for every $n\in\mathcal N$ set $G(n):=\prod_{\ell\mid n}G_\ell$ and $D_n:=\prod_{\ell\mid n}D_\ell$, the products being taken over all the primes $\ell$ dividing $n$. We also adopt the convention that $G(1)$ is trivial and $D_1$ is the identity operator. Now fix a set $S$ of representatives of $G(n)$ in $\mathcal G(n)$ and let 
\begin{equation} \label{derivative-eq}
\tilde\kappa_n:=\sum_{s\in S}sD_n\beta[n]\in\mathcal H_\infty[n]\subset\hat H^1(K_\infty[n],{T})\simeq H^1(K[n],\T). 
\end{equation}
The image of 
$\tilde\kappa_n$ in $H^1(K[n],\mathbf{T}/I_n\mathbf{T})$ is fixed by $\mathcal G(n)$. Define $\kappa_n\in H^1(K,\mathbf{T}/I_n\mathbf{T})$ to be the element mapping to $\tilde \kappa_n$ via the isomorphism 
\[ H^1(K,\mathbf{T}/I_n\mathbf{T})\overset\simeq\longrightarrow H^1(K[n],\mathbf{T}/I_n\mathbf{T})^{\mathcal G(n)} \]
induced by restriction. Note that this map is indeed bijective because 
\begin{itemize}
\item $H^0(K[n],\mathbf{T}/I_n\mathbf{T})\simeq\varprojlim_mH^0(K_m[n],T/I_nT)$;
\item $H^0(K_m[n],T/I_nT)=0$ since $T/I_nT\simeq A[I_n]$ and $A$ has no non-trivial $p$-torsion over $K_m[n]$ for any $m,n$.
\end{itemize}
To check the first assertion one can proceed as in the proof of \cite[Proposition II.1.1]{Colmez}, while the second claim is a consequence, by \cite[Lemma 3.10, (2)]{LV}, of the fact that the extension $K_m[n]/\Q$ is solvable.

The following result is the analogue of \cite[Lemma 2.3.4]{Ho1}. As we shall see, arguments in \emph{loc. cit.} involving reductions of elliptic curves modulo primes above $p$ will be replaced by considerations in $p$-adic Hodge theory.

\begin{lemma}
For every $n\in\mathcal N$ the class $\kappa_n$ belongs to $H^1_{\mathcal F_\Lambda(n)}(K,\mathbf{T}/I_n\mathbf{T})$. 
\end{lemma}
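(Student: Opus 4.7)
The plan is to verify that $\mathrm{loc}_v(\kappa_n)$ lies in the prescribed local condition at every prime $v$ of $K$. Recall that the Selmer structure $\mathcal F_\Lambda(n)$ imposes (a) the unramified condition at every prime $v\nmid np$ outside the set of ramification of $\mathbf{T}$, (b) the Greenberg condition $H^1_{\ord}$ at primes $v\mid p$, and (c) the transverse condition $H^1_{\tr}$ at primes $v\mid n$. The elements $\tilde\kappa_n$ are built from the classes $\beta[n]_m\in H^1_f(K_m[n],T)$ by applying the group-ring operator $\sum_{s\in S}sD_n$, and then $\kappa_n$ is the preimage under restriction. Since restriction preserves each local condition after base change, it suffices to verify the three conditions for $\mathrm{loc}_v(\tilde\kappa_n)$ in $H^1_*(K[n]_{\bar v},\mathbf{T}/I_n\mathbf{T})$ for a suitable prime above $v$.

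First, at a prime $v$ of $K$ not dividing $np$, the classes $\beta[n]_m$ lie in $H^1_f(K_m[n],T)$, which by definition means that their localizations are unramified. Applying the group-ring element $\sum_{s\in S}sD_n$ and passing to the quotient by $I_n$ manifestly preserves unramifiedness, so condition (a) is immediate. Second, at a prime $v\mid n$, corresponding to some $\ell\in\mathcal N$, one runs the standard Kolyvagin-derivative computation: using the telescoping identity $(\sigma_\ell-1)D_\ell=\ell+1-N_{G_\ell}$, the norm relation (2) of Lemma \ref{lemma-Heegner} applied along the tower (which gives $N_{G_\ell}\beta[n\ell]=a_\ell\beta[n]$), and the fact that $\ell+1\in I_\ell$ and that $\Frob_\lambda$ acts trivially on $T/I_\ell T$ modulo $I_\ell$, one deduces that $\mathrm{loc}_\ell(\tilde\kappa_n)$ is killed by $\sigma_\ell-1$ in $H^1(K[n]_{\bar\lambda},\mathbf{T}/I_n\mathbf{T})$. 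Combined with the direct-sum decomposition $H^1(K_\lambda,\cdot)=H^1_f\oplus H^1_{\tr}$ from \cite[Proposition 1.1.9]{Ho1} and part (3) of Lemma \ref{lemma-Heegner}, this forces $\mathrm{loc}_\ell(\kappa_n)$ into $H^1_{\tr}$, exactly as in \cite[Lemma 2.3.4]{Ho1}.

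The delicate point is condition (c), the Greenberg condition at primes $v\mid p$. Howard handles this in the elliptic-curve case by working with the formal group of the reduction, but in our higher-weight setting this geometric shortcut is unavailable. The substitute is $p$-adic Hodge theory: by ordinariness (condition (4) of Definition \ref{admissible-dfn}) and Assumption \ref{ass1}, $V$ admits a two-step $G_{\Q_p}$-stable filtration with $\mathrm{Fil}^-_v(T)$ unramified, and a standard computation with $B_\mathrm{cris}$ identifies, up to finite error controlled by the finiteness in part (3) of Assumption \ref{ass1}, the Bloch--Kato subspace $H^1_f(K_v,V)$ with the Greenberg subspace $H^1_{\ord}(K_v,V)$ (the image of $H^1(K_v,\mathrm{Fil}^+_v(V))$). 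Because the classes $\beta[n]_m$ lie in $H^1_f(K_m[n],T)$ globally, their localizations at primes above $p$ land in $H^1_{\ord}$ after inverting $p$; the integral assertion then follows by propagating this through the torsion-free quotient $\mathbf{T}/I_n\mathbf{T}$, using that the operators $D_n$ and $\sum_{s\in S}s$ act by elements of the group ring and hence respect the submodule $H^1_{\ord}(K_v,\mathbf{T}/I_n\mathbf{T})=\mathrm{im}(H^1(K_v,\mathrm{Fil}^+_v(\mathbf{T}/I_n\mathbf{T}))\to H^1(K_v,\mathbf{T}/I_n\mathbf{T}))$.

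The main obstacle is precisely this last identification between the Bloch--Kato and Greenberg local conditions at primes above $p$; once that equivalence (with its integral refinement in the form needed to descend to $\mathbf{T}/I_n\mathbf{T}$) is in hand, the remainder of the argument is purely group-theoretic manipulation of the Kolyvagin derivative and follows the template of \cite[Lemma 2.3.4]{Ho1} verbatim.
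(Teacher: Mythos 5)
Your overall plan (check the local conditions prime-by-prime, using unramifiedness away from $np$, the Kolyvagin-derivative telescope at $v\mid n$, and the ordinary/Greenberg filtration at $v\mid p$) is the right one, but two points need more than you give them.

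First, you fold the primes $v\mid N$ into case (a), where you appeal to unramifiedness. But $\mathbf{T}$ is ramified at $v\mid N$, so this falls outside your case (a) as stated, and the argument is not the same. The paper treats $v\mid N$ as a separate case: there one has to verify that for each prime $w'$ of $K_\infty[n]$ above $v$ the norm $\bigoplus_{w'\mid w}A(K_\infty[n]_{w'})\to\bigoplus_{v'\mid v}A(K_{\infty,v'})$ is surjective, which holds because the relevant local extensions have degree prime to $p$ (all prime divisors of $N$ split in $K$). Your sketch omits this entirely.

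Second, and more importantly, at $v\mid p$ you misidentify the hard step. You write that the obstacle is ``this last identification between the Bloch--Kato and Greenberg local conditions,'' and that once it is in hand the rest is ``purely group-theoretic.'' It is not. The issue is that $\kappa_n$ is defined as the preimage of $\tilde\kappa_n$ under the restriction isomorphism $H^1(K,\mathbf{T}/I_n\mathbf{T})\xrightarrow{\sim}H^1(K[n],\mathbf{T}/I_n\mathbf{T})^{\mathcal G(n)}$. Knowing that $\mathrm{loc}_w(\tilde\kappa_n)$ lies in $H^1_\ord(F[n],\mathbf{T}/I_n\mathbf{T})$ does not immediately tell you that $\mathrm{loc}_v(\kappa_n)$ lies in $H^1_\ord(F,\mathbf{T}/I_n\mathbf{T})$: you need to lift along the filtration and descend through the restriction map. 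Concretely, the paper sets up the three-column commutative diagram with rows built from $\Fil^+\to\mathbf{T}/I_n\mathbf{T}\to\Fil^-$, shows that $H^0(F[n],\mathbf{T}_n^-)=0$ (an inverse-limit argument using that $\Fil^-$ is unramified and $F_\infty/F$ is totally ramified) to get injectivity of the relevant restriction, and then shows that $H^2(F,\mathbf{T}^+)\to H^2(F[n],\mathbf{T}^+)$ is injective. This last injectivity is proved by local duality, reducing to the surjectivity of the trace $H^0(F[n],\mathbf{A}^-)\to H^0(F,\mathbf{A}^-)$, which in turn uses the finiteness of these $H^0$'s, triviality of the Herbrand quotient, and finally the crystalline vanishing $\D_\mathrm{cris}(V^-)^{\varphi=1}=0$ to conclude $H^1(F,A^-)=0$. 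None of this appears in your sketch; ``propagating through the torsion-free quotient $\mathbf{T}/I_n\mathbf{T}$'' (which, incidentally, is not torsion-free in general) glosses over precisely the cohomological content that makes the argument work.
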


\begin{proof} To check that the localization of $\kappa_n$ at a prime $v\,|\,n$ lies in the transverse subspace one can follow \cite[Lemma 2.3.4]{Ho1}, which is based on the formal argument described in the proof of \cite[Lemma 1.7.3]{Ho1}.  

The case where $v\nmid Npn$ can also be treated similarly as in \cite[Lemma 2.3.4]{Ho1}, using the fact that the classes $z_{np^m}$ are unramified. 

In the case where $v\,|\,N$ one needs, as in \emph{loc. cit.}, to check that for all $v'\,|\,v$ in $K_\infty$, all $w\,|\,v$ in $K[n]$ and all $w'\,|\,w$ in $K_\infty[n]$ (note that $w$ and $v$ are finitely decomposed in the respective extensions, since all primes dividing $N$ split in $K$) the map 
\[ \bigoplus_{w'\mid w}A\bigl({K_\infty[n]}_{w'}\bigr)\longrightarrow\bigoplus_{v'\mid v}A(K_{\infty,v'}) \] 
induced by the norm is surjective: this is true because the degree of ${K_\infty[n]}_{w'}/K_{\infty,v'}$ is prime to $p$.  

The case that needs more substantial changes is the one where $v\,|\,p$, which we describe in greater detail. Let $v\,|\,p$ be a prime of $K$ and fix a prime $w$ of $K[n]$ above $v$. We still denote by $v$ and $w$ the unique primes of $K_\infty$ and $K_\infty[n]$ above $v$ and $w$, respectively, and similarly for $K_m[n]$. To simplify our notation, we set $F:=K_v$, $F_\infty:=K_{\infty,v}$, $F_m=K_{m,v}$, $F[n]:={K[n]}_w$, $F_m[n]:={K_m[n]}_w$ and $F_\infty[n]={K_\infty[n]}_w$. Define 
\[ H^1_\ord(F_m[n],\T):=\mathrm{im}\Big(H^1\bigl(F_m[n],\Fil^+_w(\T)\bigr)\longrightarrow H^1(F_m[n],\T)\Big), \]
and analogously for $F_\infty[n]$ in place of $F_m[n]$. To begin with, since $\mathcal H_m[n]\subset H^1_\ord(F_m[n],T)$ for all $m\geq0$, Shapiro's lemma shows that $\mathcal H_\infty[n]\subset H^1_\ord(F_\infty[n],\T)$. 

Now recall the filtrations $\Fil^\pm_\star(\T)$ on $\T$ (and, below, the filtrations $\Fil^\pm_\star(\mathbf{A})$ on $\mathbf{A}$) that were introduced in \S \ref{sec4.3}. To further lighten the notation, we set 
\[ \T_n:=\T/I_n\T,\qquad\T_n^+:=\Fil^+_w(\T)/I_n\Fil^+_w(\T),\qquad\T_n^-:=\Fil^-_w(\T)/I_n\Fil^-_w(\T). \] 
Then we can consider the commutative diagram
\begin{equation} \label{T-diagram-eq}
\xymatrix{H^1(F,\T_n^+)\ar[r] \ar[d]& H^1(F,\T_n)\ar[r] \ar[d]&  
H^1(F,\T_n^-)\ar[d]\\
H^1(F[n],\T_n^+)\ar[r] & H^1(F[n],\T_n)\ar[r] &  
H^1(F[n],\T_n^-)}
\end{equation}
whose vertical arrows are restrictions and whose rows are exact. The argument above shows that the image of the localization of $\kappa_n$ in $H^1(F[n],\T^-_n)$ is trivial. By the inflation-restriction sequence, the kernel of the right vertical arrow in \eqref{T-diagram-eq} is 
\[ H^1\bigl(\Gal(F[n]/F),H^0(F[n],\T_n^-)\bigr). \] 
But $H^0(F[n],\T_n^-)=0$, hence the above-mentioned map is injective. In order to justify this vanishing, observe that $\Fil_w^-(T)$ is unramified and the extension $F_{\infty}/F$ is totally ramified, hence the invariant submodule of $\T_n^-=\bigl(\Fil^-_w(T)/I_n\Fil^-_w(T)\bigr)\otimes_\cO\Lambda$ under the inertia is the inverse limit with respect to the corestriction (i.e., multiplication-by-$p$) maps of the groups $\Fil^-_w(T)/I_n\Fil^-_w(T)$. It follows that 
\begin{equation} \label{H^0-lim-eq}
H^0(F[n],\T_n^-)=\invlim_n\bigl(\Fil^-_w(T)/I_n\Fil^-_w(T)\bigr)^{\Gal(\Q_p^\mathrm{unr}/{F[n]})}.
\end{equation} 
Since $\bigl(\Fil^-_w(T)/I_n\Fil^-_w(T)\bigr)^{\Gal(\Q_p^\mathrm{unr}/{F[n]})}$ is finite, the inverse limit in \eqref{H^0-lim-eq} is zero and the claim follows. Choose a lift $\alpha\in H^1(F,\T_n^+)$ of $\kappa_n$. Since $H^0(F[n],\T_n^-)=0$, the bottom left horizontal arrow of \eqref{T-diagram-eq} is injective, hence the image of $\alpha$ in $H^1(F[n],\T_n^+)$ is the unique lift of the image of $\kappa_n$ in $H^1(F[n],\T_n)$. Set $\T^+=\Fil^+_w(\T)$. Since $\kappa_n$ belongs to the image of $H^1(F[n],\T^+)$ in $H^1(F[n],\T_n)$ by construction, the class $\alpha$ maps to zero in the right lower entry of 
\[ \xymatrix{H^1(F,\T^+)\ar[r]\ar[d] & H^1(F,\T^+_n) \ar[r]\ar[d] & H^2(F,\T^+)\ar[d]\\
H^1(F[n],\T^+)\ar[r] &  H^1(F[n],\T^+_n) \ar[r] & H^2(F[n],\T^+).} \]
To complete the proof, we only need to show that the right vertical arrow is injective. By duality, 
it is enough to show that if $\mathbf{A}^-:=\Fil^-_w(\mathbf{A})$ then the trace map
\[ \tr_{F[n]/F}: H^0(F[n], \mathbf{A}^-)\longrightarrow H^0(F,\mathbf{A}^-) \] 
is surjective. Setting again 
\[ A^-:=\Fil^-_w(A)\simeq (T^+)^*:=\Hom_{\Z_p}\bigl(T^+,F/\mathcal O(1)\bigr) \] 
and using the fact that $A^-$ is unramified, we need to check the surjectivity of 
\[ \tr_{F[n]/F}: H^0(F[n],  {A}^-)\longrightarrow H^0(F, {A}^-). \] 
The first step is to prove that $H^0(F[n],  {A}^-)$ is finite. Define $\Q_p(\zeta_{p^\infty}):=\cup_{n\geq1}\Q_p(\zeta_{p^n})$, where $\zeta_{p^n}\in\bar\Q_p$ is a primitive $p^n$-th root of unity. It is well known that the action of $G_{\Q_p(\zeta_{p^\infty})}/{I_{\Q_p(\zeta_{p^\infty})}}$ on $A^-$ is non-trivial, and this shows that $H^1(\Q_p(\zeta_{p^\infty}),A^-)$ is finite (see, e.g., \cite[p. 84]{Ochiai} for details). It follows that $H^0(\Q_p,A^-)$ is finite, and using the fact that the extension $F[n]/\Q_p$ is finite we see that $H^0(F[n],A^-)$ must be finite as well. Since the Herbrand quotient of a cyclic group acting on a finite module is $1$, it suffices to show that $H^1\bigl(F[n]/F,H^0(F[n],A^-)\bigr)=0$. This group injects into $H^1(F,A^-)$, so we are done if we prove that $H^1(F,A^-)=0$. As $A^-$ is unramified, this cohomology group is isomorphic to $A^-/(\varphi -1)A^-$ where $\varphi\in\Gal(F^\mathrm{unr}/F)$ is the Frobenius. There is a surjection $V^-\twoheadrightarrow A^-$, hence it suffices to show that $V^-/(\varphi -1)V^-=0$. As in \S \ref{sec-Bloch-Kato}, let $B_\mathrm{cris}$ be Fontaine's ring of crystalline periods, then set
\[ \D_\mathrm{cris}(M):=(M\otimes_\Q B_\mathrm{cris})^{G_F}. \] 
We know that $\D_\mathrm{cris}(V^-)^{\varphi =1}=0$ (see, e.g., \cite[p. 83]{Ochiai}), hence $\varphi -1$ is an isomorphism on $\D_\mathrm{cris}(V^-)$. Since the functor $M\mapsto\D_\mathrm{cris}(M)$ is an equivalence between the category of crystalline representations and the category of filtered admissible $\varphi$-modules, and $V^-$ is crystalline because $V$ is, it follows that $\varphi-1$ is an isomorphism on $V^-$. We conclude that $V^-/(\varphi-1)V^-=0$. \end{proof}

\begin{theorem} \label{kolyvagin-thm}
There exists a Kolyvagin system $\kappa^\mathrm{Heeg}\in\mathbf{KS}(\mathbf{T}, \mathcal F_\Lambda,\mathcal L)$ such that the class $\kappa_1^\mathrm{Heeg}\in H^1_{\mathcal F_\Lambda}(K,\mathbf{T})$ is non-zero. 
\end{theorem}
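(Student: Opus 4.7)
The proof has two tasks: (i) verify the finite-singular compatibility of Definition \ref{def-Kol-sys} for the family $\kappa^{\mathrm{Heeg}} = \{\kappa_n\}_{n \in \mathcal N}$ constructed from the classes $\beta[n]$ of Proposition \ref{prop3.6} via Kolyvagin derivatives; and (ii) show that the base class $\kappa_1^{\mathrm{Heeg}}$ is non-zero in $H^1_{\mathcal F_\Lambda}(K, \T)$. The preceding lemma has already placed each $\kappa_n$ in $H^1_{\mathcal F_\Lambda(n)}(K, \T/I_n\T)$, so only these two points remain.

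For (i), the task is to check that the images of $\kappa_n$ and $\kappa_{n\ell}$ in $H^1_\mathrm{s}(K_\ell, \T/I_{n\ell}\T) \otimes G_{n\ell}$ under the two maps of diagram \eqref{diagram} coincide. This is a formal derivative computation combining three ingredients: the algebraic identity $(\sigma_\ell - 1) D_\ell \equiv -N_\ell \pmod{\ell + 1}$ in $\Z[G_\ell]$ (where $N_\ell = \sum_{i=0}^{\ell-1}\sigma_\ell^i$ is the norm), which produces the telescoping converting the finite part to the singular part modulo $I_{n\ell}$; the trace relation $\cor_{K_\infty[n\ell]/K_\infty[n]}(\beta[n\ell]) = a_\ell\, \beta[n]$ of Proposition \ref{prop3.6}; and the Frobenius relation in part (3) of Lemma \ref{lemma-Heegner}, which identifies the localization at $\ell$ with the geometric Frobenius action and thereby supplies the finite-singular comparison $\phi_\ell^\mathrm{fs}$. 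The argument then proceeds exactly as in \cite[Proposition 2.3.6]{Ho1} (cf.\ \cite[\S 4--5]{MR}), with no substantive modification beyond plugging in our higher-weight norm relations in place of the weight-2 ones.

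For (ii), it suffices to show that $\beta[1] \in \HH_\infty[1]$ is non-zero, since $D_1 = 1$ and $G(1)$ is trivial, so the isomorphism recalled after \eqref{derivative-eq} identifies $\kappa_1^{\mathrm{Heeg}}$ with the pullback of $\beta[1]$. Equivalently, one needs some component $\beta_m[1] \in H^1_f(K_m[1], T)$ to be non-zero. For this I appeal to the higher-weight analogue of Cornut's non-vanishing theorem for the Castella--Hsieh cycles established in \cite{CH}, which implies that for $m$ sufficiently large the class $\alpha_m[1]$ does not lie in the image of $\cor_{K_m[1]/K_{m-1}[1]}$. Combining this with the explicit description of $\beta_m[1]$ from the proof of Proposition \ref{prop3.6} as the image under $\phi(1)$ of a lift of $\Phi x$ in $\tilde{\HH}_\infty$, and using that Lemma \ref{Univ-Norms} together with the ordinariness assumption $a_p \in \cO_\p^\times$ ensures this lift can be arranged so that the coefficient of $\alpha_m[1]$ in $\beta_m[1]$ is a unit in $\cO_\p[\Gal(K_m[1]/K)]$, we conclude that $\beta_m[1]$ is non-trivial modulo corestrictions from $K_{m-1}[1]$, hence non-zero.

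The decisive obstacle is (ii): importing the Cornut-type non-vanishing from \cite{CH} and propagating it through the Perrin-Riou-style construction of $\beta[1]$ requires careful bookkeeping on how the chosen lift of $\Phi x$ in $\tilde{\HH}_\infty$ inherits the non-degeneracy of the underlying Heegner system along $K_\infty/K$. Step (i), by contrast, is a purely formal derivation once the norm relations of \S \ref{PR-subsec} are in hand.
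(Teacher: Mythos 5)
Your overall outline is correct, but step (i) contains a gap that the paper addresses explicitly and that you have glossed over. The derived classes $\kappa_n$ do \emph{not} by themselves satisfy the compatibility in diagram \eqref{diagram}: Lemma \ref{lemma-Heegner}(3) compares $\mathrm{loc}_\ell(\alpha_m[n\ell])$ with the geometric Frobenius action on $\mathrm{loc}_\ell(\alpha_m[n])$, and translating this into a statement about the finite--singular isomorphism $\phi_\ell^{\mathrm{fs}}$ (via \cite[(K2)]{CH} and \cite[Proposition~10.2]{Nek}) only yields an equality up to a $p$-adic unit $u_\ell$, namely $\mathrm{loc}_\ell(\alpha_m[n\ell]) = u_\ell\cdot\phi_\ell^{\mathrm{fs}}\bigl(\mathrm{loc}_\ell(\alpha_m[n])\bigr)$. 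One must therefore rescale, setting $\kappa'_n := \bigl(\prod_{\ell\mid n}u_\ell^{-1}\bigr)\cdot\kappa_n\otimes(\otimes_{\ell\mid n}\sigma_\ell)$, to obtain a genuine Kolyvagin system (and note also that the classes must be tensored with generators of $G_n$ to even lie in the right module, per Definition \ref{def-Kol-sys}). Your description of the telescoping identity $(\sigma_\ell-1)D_\ell\equiv -N_\ell\pmod{\ell+1}$ and the trace relation is the right skeleton, but without the unit correction the compatibility fails on the nose.

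For step (ii) you correctly identify the key input --- the Castella--Hsieh higher-weight analogue of Cornut's theorem --- and you are candid about leaving the ``bookkeeping'' undone. The paper handles this by deferring entirely to Theorem \ref{non-triv}, a stronger structure theorem proved immediately afterwards which shows that $\mathcal H_\infty$ is free of rank $1$ over $\Lambda$ and is generated by $\tilde\kappa_1$; the non-vanishing of $\kappa_1^{\mathrm{Heeg}}$ is an immediate corollary. What you sketch is morally the content of Claims 1 and 2 inside that proof, and it is precisely the part that requires the Perrin-Riou-style recursion (Corollary \ref{coro5PR}) together with the ordinariness $a_p\in\cO_\p^\times$; stating that the coefficient of $\alpha_m[1]$ in $\beta_m[1]$ ``can be arranged to be a unit'' is the conclusion you need to prove, not a fact you can invoke. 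So your argument is headed in the right direction but is not a proof.
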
 

\begin{proof} The classes $\kappa_n$, $n\in\mathcal N$, almost form a Kolyvagin system (cf. Definition \ref{def-Kol-sys}). We only need to slightly modify them in order to gain the compatibility in diagram \eqref{diagram}. We proceed as in \cite[\S 1.7]{Ho1}. For every $\ell\in \mathcal L$ let $u_\ell$ be the $p$-adic unit satisfying the relation 
\begin{equation}
\label{final-lemma}
\mathrm{loc}_\ell\left(\alpha_m[n\ell]\right)=u_\ell\cdot\phi^\mathrm{fs}_\ell\bigl(\mathrm{loc}_\ell(\alpha_m[n])\bigr).
\end{equation} 
Such a $u_\ell$ exists thanks to a combination of \cite[(K2)]{CH} and \cite[Proposition 10.2]{Nek}. If we define 
\[ \kappa'_n:=\Bigg(\prod_{\ell\mid n}u_\ell^{-1}\Bigg)\cdot \kappa_n\otimes\bigl(\otimes_{\ell\mid n}\sigma_\ell\bigr)\in H^1_{\mathcal F(n)}(K,T/I_nT)\otimes G_n \] 
then \eqref{final-lemma} ensures that $\kappa^\mathrm{Heeg}:={\{\kappa_n'\}}_{n\in \mathcal N}$ is a Kolyvagin system. Moreover, $\kappa^\mathrm{Heeg}_1=\kappa_1$. The non-triviality of $\kappa_1^\mathrm{Heeg}$ follows from Theorem \ref{non-triv} below. \end{proof}

Now we introduce the Selmer group where our Iwasawa module of generalized Heegner cycles naturally lives.

\begin{definition} \label{bloch-kato-dfn}
The \emph{pro-$p$ Bloch--Kato Selmer group of $f$ over $K_\infty$} is the $\Lambda$-module 
\[ \hat H^1_f(K_\infty,{T}):=\invlim_mH^1_f(K_m,{T}), \] 
where the inverse limit is taken with respect to the corestriction maps. 
\end{definition}

\begin{remark}
It can be shown that $\hat H^1_f(K_\infty,{T})$ is free of finite rank over $\Lambda$.
\end{remark}

For every $m\geq1$ denote by $\mathcal H_m$ the $\Lambda_m$-submodule of $H^1_f(K_m,T)$ that is generated by $\cor_{K[1]/K}(z_1)$ and $\cor_{K_m[1]/K_m}\bigl(\res_{K_m[1]/K_j[1]}(\alpha_j[1])\bigr)$ for all $j\leq m$. In line with Definition \ref{tame-dfn}, we give

\begin{definition} \label{iwasawa-module-dfn}
The \emph{Iwasawa module of generalized Heegner cycles} is the compact $\Lambda$-module
\[ \HH_\infty:=\invlim_m\HH_m\subset\hat H^1_f(K_\infty,{T}), \]
where the inverse limit is taken with respect to the corestriction maps. 
\end{definition}

\begin{remark}
If we set $K[0]:=K$ and allow for $n=0$ in Definition \ref{tame-dfn} then $\HH_\infty$ essentially coincides with $\HH_\infty[0]$. 
\end{remark}

Let $\tilde\kappa_1\in\HH_\infty[1]$ be defined as in \eqref{derivative-eq}. Since $G(1)$ is trivial and $D_1$ is the identity operator, we see that $\tilde\kappa_1=\sum_{\sigma\in\mathcal G(1)}\sigma\beta[1]$. This shows that we can view $\tilde\kappa_1$ as an element of $\mathcal H_\infty$.

\begin{theorem}\label{non-triv}
The $\Lambda$-module $\mathcal H_\infty$ is free of rank $1$, generated by $\tilde\kappa_1$.
\end{theorem}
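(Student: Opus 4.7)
The plan is to establish two complementary facts: (a) $\mathcal H_\infty$ is generated by $\tilde\kappa_1$ as a $\Lambda$-module, and (b) $\tilde\kappa_1$ is non-zero. By the remark preceding Definition \ref{iwasawa-module-dfn}, the ambient module $\hat H^1_f(K_\infty,T)$ is free of finite rank over $\Lambda$, so $\mathcal H_\infty$ is torsion-free. Combining (a) with this torsion-freeness, the isomorphism $\mathcal H_\infty\simeq\Lambda/\mathrm{Ann}_\Lambda(\tilde\kappa_1)$ forces $\mathrm{Ann}_\Lambda(\tilde\kappa_1)\in\{(0),\Lambda\}$, since these are the only ideals of the regular local ring $\Lambda\simeq\cO_\p[\![X]\!]$ whose quotient is torsion-free; then (b) rules out the second option, yielding $\mathcal H_\infty\simeq\Lambda$.

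For claim (a) I would argue at each finite level $m$ and pass to the inverse limit by a standard compactness argument. It suffices to prove that the image of $\tilde\kappa_1$ in $\mathcal H_m$ generates this module over $\Lambda_m$. This image coincides, via the restriction--corestriction identity, with $\cor_{K_m[1]/K_m}(\beta_m[1])$, where $\beta_m[1]$ is obtained from a lift of $\Phi x\in\tilde{\mathcal H}_0$ to $\tilde{\mathcal H}_\infty$ via the specialization map $\phi(1)$ appearing in the proof of Proposition \ref{prop3.6}. The generators of $\mathcal H_m$ listed in Definition \ref{iwasawa-module-dfn} are all produced from $z_1$ and the $\alpha_j[1]$ by corestriction-restriction operations. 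Using the norm relations of Lemma \ref{lemma-Heegner} together with the Perrin-Riou--type identity $\gamma_m=q_m\Phi+p^{(m-1)k/2}r_m$ of Lemma \ref{Univ-Norms}, and crucially the admissibility condition $a_p\in\cO_\p^\times$ (which, via the congruence $q_{m+1}\equiv a_pq_m\pmod{\p}$ in Lemma \ref{Univ-Norms}, forces each $q_m$ to be a unit in the local ring $\cO_\p[\mathcal G(1)]$), one can express each such generator as an explicit $\Lambda_m$-linear combination of Galois translates of $\cor_{K_m[1]/K_m}(\beta_m[1])$, paralleling the abstract identities available in $\tilde{\mathcal H}_m$.

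The hard part is claim (b), the non-triviality of $\tilde\kappa_1$, for which I would rely on the deepest analytic input of this section. The approach is to project $\tilde\kappa_1$ through the specialization map at a suitable character $\nu$ of $\Gamma_\infty$. By the construction of $\beta[1]$ in Proposition \ref{prop3.6} and the base equality $\beta_0[1]=\Phi z_1$, this specialization recovers, up to a non-zero factor involving $\Phi$ evaluated at $\nu$ (non-zero because $\Phi$ is a non-zero divisor in $\Lambda$), the cohomological image of a generalized Heegner class of Bertolini--Darmon--Prasanna type in the appropriate finite layer of $K_\infty/K$. Non-vanishing of this specialization for suitable $\nu$ (equivalently, for sufficiently large layer in the tower) is the content of the Castella--Hsieh non-triviality theorem, i.e., the higher-weight analogue of Cornut's theorem cited in the introduction. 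This establishes $\tilde\kappa_1\neq 0$ and completes the proof.
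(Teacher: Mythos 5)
Your overall logical skeleton is sound and differs somewhat from the paper's: you decouple (a) generation by $\tilde\kappa_1$ from (b) non-vanishing, and then deduce freeness of rank $1$ from torsion-freeness of $\mathcal H_\infty\subset\hat H^1_f(K_\infty,T)$; the paper instead first establishes freeness of rank $1$ (via Castella--Hsieh's non-triviality and Perrin-Riou's Proposition 10) and then proves generation by $\tilde\kappa_1$ separately. Your part (b) is essentially the paper's use of \cite[Theorem 6.1]{CH}, and the final deduction is fine given the remark that $\hat H^1_f(K_\infty,T)$ is $\Lambda$-free.

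The genuine gap is in your claim (a). You reduce to showing that, for each $m$, the image of $\tilde\kappa_1$ in $\mathcal H_m$ generates $\mathcal H_m$ over $\Lambda_m$. This reduction is not valid: the projections $\mathcal H_\infty\to\mathcal H_m$ are not surjective in general, because the transition maps are corestrictions and the classes $\alpha_j[1]$ are \emph{not} universal norms --- the norm relation $\cor_{K_{m+1}[n]/K_m[n]}(\alpha_{m+1}[n])=a_p\alpha_m[n]-p^{k-2}\res(\alpha_{m-1}[n])$ does not split $\alpha_m[n]$ off as an image. Thus what one would actually need is that $\Lambda\tilde\kappa_1$ hits the image of $\mathcal H_\infty$ in $\mathcal H_m$, a module whose description is precisely what is at stake, and which is generally a proper submodule of $\mathcal H_m$. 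Worse, the assertion that $\cor_{K_m[1]/K_m}(\beta_m[1])$ generates $\mathcal H_m$ is almost certainly false: at level $m$ the module $\mathcal H_m$ has, roughly, three generators ($\cor(z_1)$, $\cor(\alpha_m[1])$, $\cor(\alpha_{m-1}[1])$, the lower $\alpha_j$ being recursively expressible), and $\beta_m[1]$ is a \emph{single} $\cO_\p[\Gal(K_m[1]/K)]$-combination of these; the claimed ``express each generator as a $\Lambda_m$-linear combination of Galois translates of $\cor(\beta_m[1])$'' is not substantiated and there is no reason it should hold.

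The correct argument, which your proposal does not contain, is a Nakayama argument at the infinite level: one must show that $\tilde\HH_\infty^\mathcal G/(\gamma-1)\tilde\HH_\infty^\mathcal G$ is generated over $\cO_\p$ by the image of $y^\mathcal G$. This splits into the two claims of the paper: (i) the image of $\Psi\colon\tilde\HH_\infty^\mathcal G\to\tilde\HH_0^\mathcal G$ is free of rank $1$ generated by $\Psi(y^\mathcal G)=\mathrm{aug}(\Phi)x^\mathcal G$, which uses Corollary~\ref{coro5PR} plus the stabilization $\mathrm{aug}(\gamma_m)\cO_\p=\mathrm{aug}(\Phi)\cO_\p$ for $m\gg0$; and (ii) $\Psi$ induces an isomorphism $\tilde\HH_\infty^\mathcal G/(\gamma-1)\tilde\HH_\infty^\mathcal G\overset\simeq\longrightarrow\mathrm{aug}(\Phi)\tilde\HH_0^\mathcal G$, which is proved by writing an arbitrary $h\in\ker(\Psi)$ as $h_m=a_mx_m^\mathcal G+b_mx_{m-1}^\mathcal G+(\gamma-1)z_m$, producing the auxiliary elements $t_m$, verifying the recursion $\tr(t_{m+1})=p^{k-1}t_m$, and letting $\ell\to\infty$ to force $h_m\in(\gamma-1)\tilde\HH_m^\mathcal G$. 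Your proof has none of this; the Perrin-Riou identity of Lemma~\ref{Univ-Norms} and the admissibility hypothesis $a_p\in\cO_\p^\times$ enter precisely through Claim (i), and the $t_m$-recursion of Claim (ii) is indispensable and missing.
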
 

\begin{proof} A higher weight analogue due to Castella and Hsieh (\cite[Theorem 6.1]{CH}) of a result of Cornut (\cite{Co}) on the non-triviality of Heegner points on elliptic curves along anticyclotomic $\Z_p$-extensions ensures that $\cores_{K_m[1]/K_m}(\alpha_m[1])$ is non-torsion for $m\gg0$. Using this fact, one can show that $\mathcal H_\infty$ is free of rank $1$ over $\Lambda$ by mimicking the proof of \cite[\S 3, Proposition 10]{PR}, where an analogous result is obtained for Heegner points. Therefore it remains to show that $\mathcal H_\infty$ is generated by $\tilde\kappa_1$. We argue as in the proof of \cite[Theorem 2.3.7]{Ho1}. 

Recall that $\Gamma_m=\Gal(K_m/K)$, then write $\Gal(K_m[1]/K)\simeq\Gamma_m\times\mathcal G$ with $\mathcal G:=\mathcal G(1)$. Let $\tr_\mathcal G\in\cO_\p[\mathcal G]\subset\Lambda[\mathcal G]$ be the trace operator. Recall also the modules $\tilde\HH_m$ and $\tilde\HH_\infty$ with $n=1$ and the elements $x$, $x_j$, $y$ introduced in the proof of Proposition \ref{prop3.6}. Set, as usual, $\tilde\HH_\infty^\mathcal G:=H^0(\mathcal G,\tilde\HH_\infty)$, then define 
\[ x_j^\mathcal G:=\tr_\mathcal G(x_j)\in\tilde\HH_\infty^\mathcal G,\qquad y^\mathcal G:=\tr_\mathcal G(y)\in\tilde\HH_\infty^\mathcal G. \]
There is a commutative square
\[ \xymatrix@R=30pt@C=30pt{\tilde\HH_\infty\ar@{->>}[r]\ar@{->>}[d]^-{\tr_\mathcal G} & \mathcal H_\infty\ar@{->>}[d]^-{\cor_{H_1/K}}\\\tilde\HH_\infty^\mathcal G\ar@{->>}[r] & \mathcal H_\infty} \] 
in which all maps are surjective, the top horizontal arrow takes $x_j$ to $\alpha_j$ and the bottom horizontal arrow takes $x_j^\mathcal G$ to $\cores_{H_1/K}(\alpha_j)$ and $y^\mathcal G$ to $\tilde\kappa_1$. Fix a topological generator $\gamma$ of $\Gamma_\infty$. By Nakayama's lemma, it is enough to show that 
\begin{equation} \label{final-claim}
\text{$\tilde\HH_\infty^\mathcal G$ is isomorphic to $\Lambda y^\mathcal G+(\gamma-1)\tilde\HH_\infty^\mathcal G$.}
\end{equation} 
This is done in two steps that correspond to \cite[Lemmas 2.3.8 and 2.3.9]{Ho1}. 

For every $m\geq0$, set $\tilde\HH_m^\mathcal G:=H^0(\mathcal G,\tilde\HH_m)$. The $\cO_\p$-module $\tilde\HH_0^\mathcal G$ is free of rank $1$, generated by $x^\mathcal G$, and there is a canonical map 
\[ \Psi:\tilde\HH_\infty^\mathcal G\longrightarrow\tilde\HH_0^\mathcal G. \] 
\noindent{\bf Claim 1.} The image of $\Psi$ is a free $\cO_\p$-module of rank $1$ generated by $\Psi(y^\mathcal G)=\mathrm{aug}(\Phi)x^\mathcal G$, where $\mathrm{aug}:\cO_\p[\mathcal G]\rightarrow\cO_\p$ is the augmentation map.

\noindent{\bf Claim 2.} The map $\Psi$ induces an isomorphism 
\[ \bar\Psi:\tilde\HH_\infty^\mathcal G/(\gamma-1)\tilde\HH_\infty^\mathcal G\overset\simeq\longrightarrow \mathrm{aug}(\Phi)\tilde\HH_0^\mathcal G. \]
Clearly, Claims 1 and 2 imply \eqref{final-claim}, so it remains to justify these two assertions. 

For the first claim one can follow the proof of \cite[Lemma 2.3.8]{Ho1}, replacing \cite[\S 3.3, Corollaire 5]{PR} with our Corollary \ref{coro5PR}. More precisely, with notation as in \S \ref{PR-subsec}, one notes that $\tr_{K_m/K}(x_k^\mathcal G)=\mathrm{aug}(\gamma_m)x^\mathcal G$. Corollary \ref{coro5PR} implies that $\bigcap_{m\geq 1}\mathrm{aug}(\gamma_m)\cO_\p=\mathrm{aug}(\Phi)\cO_\p$, and then the recursive formulas in \S \ref{PR-subsec} ensure that $\mathrm{aug}(\gamma_m)\cO_\p=\mathrm{aug}(\Phi)\cO_\p$ for $m\gg0$, which implies Claim 1. 

The proof of the second claim proceeds along the lines of the proof of \cite[Lemma 2.3.9]{Ho1}, with only a minor variation in one of the recursive relations appearing there. This is due to the fact that the elements $\gamma_m$ defined in \S \ref{PR-subsec} are slightly different from their namesakes in \cite{Ho1}. However, for the reader's convenience we provide a proof, which largely overlaps the one given in \cite{Ho1}.

First we observe that we only need to show that $\bar\Psi$ is injective, since it is surjective by Claim 1. Fix $h={(h_m)}_{m\geq1}\in\ker(\Psi)$. For every $m\geq0$ the $\Lambda$-module $\tilde\HH_m^\mathcal G$ is generated by $x_m^\mathcal G$ and $x_{m-1}^\mathcal G$, so we can write 
\[ h_m=a_mx_m^\mathcal G+b_mx_{m-1}^\mathcal G+(\gamma-1)z_m \] 
for suitable $a_m,b_m\in\cO_\p$. Taking the trace to $\tilde\HH_0^\mathcal G$, and using the fact that $x^\mathcal G$ has infinite order, we obtain 
\[ a_m\mathrm{aug}(\gamma_m)+pb_m\mathrm{aug}(\gamma_{m-1})=0, \]
hence
\[ \mathrm{aug}(\gamma_m)h_m\in b_mt_m+(\gamma-1)\tilde\HH_m^\mathcal G \] 
with 
\[ t_m:=-p\,\mathrm{aug}(\gamma_{m-1})x_m^\mathcal G+\mathrm{aug}(\gamma_m)x_{m-1}^\mathcal G. \]
Applying the trace operator we get:
\begin{align*}
\tr_{K_{m+1}/K_{m}}(t_{m+1})&=-p\,\mathrm{aug}(\gamma_m)(a_p x_m^\mathcal G- p^{k-2}x_{m-1}^\mathcal G)+p\,\mathrm{aug}(a_p\gamma_{m}-p^{k-1}\gamma_{m-1})x_{m}^\mathcal G\\
&= p^{k-1}\bigl(\mathrm{aug}(\gamma_m)x_{m-1}^\mathcal G- p\,\mathrm{aug}(\gamma_{m-1})x_m^\mathcal G\bigr)\\
&=p^{k-1}t_{m}. 
\end{align*}
If we take $m$ large enough that $\mathrm{aug}(\gamma_\ell)=\mathrm{aug}(\Phi)$ for all $\ell\geq m$ then  
\[ \mathrm{aug}(\gamma_m)h_m=\mathrm{aug}(\gamma_\ell)\tr_{K_\ell/K_m}(h_\ell)\in b_\ell p^{(k-1)(\ell-m)}t_m+(\gamma-1)\tilde\HH_m^\mathcal G \] 
for $\ell\gg m$. Finally, letting $\ell\rightarrow \infty$ shows that $h_m\in(\gamma-1)\tilde\HH_m^\mathcal G$ for every $m$, and Claim 2 follows. \end{proof} 

\begin{remark}
The analogue of \cite[Theorem 6.1]{CH} when $V_{f,\p}$ is replaced by Deligne's $\ell$-adic representation with $\ell\not=p$ was proved by Howard in \cite[Theorem A]{Ho-JNT}.
\end{remark}

\begin{remark} 
For each $n\in \mathcal N$ and any integer $m\geq0$, Castella and Hsieh introduced in \cite[\S 5.2]{CH} certain \emph{$\alpha$-stabilized Heegner classes} 
\[ z^o_{f,mp^n,\alpha}\in H^1_f(K[np^m],T), \]  
where $\alpha$ is the $p$-adic unit root of the Hecke polynomial $X^2-a_pX+p^{k-1}$. These classes are defined in terms of the elements $z_{mp^n}$ of \S \ref{generalized-heegner-subsec} via a regularization process that is analogous to the one used in \cite[\S 2.5]{BD96} in the case of Heegner points on elliptic curves. As is shown in \cite[Lemma 5.3]{CH}, for each $n$ one has $\cores_{K[np^m]/K[np^{m-1}]}(z^o_{f,np^m,\alpha})=\alpha z^o_{f,np^{m-1},\alpha}$, so we get an element 
\[ \mathbf{z}_{f,n,\alpha}:=\invlim_m\alpha^{-n}z^o_{f,np^m,\alpha}\in\invlim_mH^1_f(K[np^m],T). \]
Using the elements $\mathbf{z}_{f,n,\alpha}$, one might be able to show that the $\Lambda$-module $\mathcal H_\infty$ is free of rank $1$ by adapting the techniques developed by Bertolini in \cite{Ber1}.
\end{remark}

\section{Proof of Theorem \ref{main-thm} and Main Conjecture}

In this final section we prove the main result of this paper (Theorem \ref{main-thm}) and formulate a Main Conjecture \emph{\`a la} Perrin-Riou for generalized Heegner cycles, one divisibility of which is the content of part (3) of Theorem \ref{main-thm}. 

\subsection{Proof of Theorem \ref{main-thm}} \label{main-proof-subsec}

We note that hypotheses (H.0)--(H.5) and Assumption \ref{ass1} in \S \ref{triples-subsec} are satisfied in this setting. First of all, the ordinariness of $f$ that we imposed in part (5) of Assumption \ref{ass} ensures that condition (1) in Assumption \ref{ass1} is satisfied. Also, in this case, for any number field $E$ there is a natural identification 
\[ \Sel_{\mathrm{Gr}}(T/E)=H^1_f(E,T) \]
between Greenberg's and Bloch--Kato's Selmer groups (cf. \cite[\S 2.4]{Howard-Inv}). Moreover, by comparing local conditions, one can check that there is a canonical isomorphism
\[ \hat H^1_f(K_\infty,T)\simeq H^1_{\mathcal F_\Lambda}(K,\mathbf{T}), \]
where $\mathcal F_\Lambda$ is the Selmer structure introduced in \S \ref{sec4.3}.

The self-duality of $V$ implies that, possibly after rescaling the isomorphism 
\[ V\simeq V^*:=\Hom_{F_\p}\bigl(V,F_\p(1)\bigr), \] 
our chosen lattice $T\subset V$ is also self-dual, i.e., $T\simeq T^*:=\Hom_{\cO_\p}\bigl(T,\mathcal O_\p(1)\bigr)$. This proves the existence of the pairing in (H.4). It is well known that $\Fil_v^+(T)$ is maximal orthogonal with respect to this pairing, so condition (2) in Assumption \ref{ass1} is satisfied. On the other hand, condition (3) in Assumption \ref{ass1} is \cite[p. 84]{Ochiai}. Finally, hypothesis (H.5) can be checked as in the proof of \cite[Proposition 2.1.3]{Ho1} and then Theorem \ref{main-thm} follows by combining Theorems \ref{MR} and \ref{kolyvagin-thm}. 

\subsection{Main Conjecture} 

Recall the finitely generated torsion $\Lambda$-module $M$ in part (2) of Theorem \ref{main-thm}.  

\begin{conjecture}[Main Conjecture] \label{main-conj}
$\mathrm{char}(M)=\mathrm{char}\Big(\hat H^1_{f}(K_\infty,T)/\mathcal H_\infty\Big)$.
\end{conjecture}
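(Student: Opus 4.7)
One divisibility is already part (3) of Theorem \ref{main-thm}, so the issue is to prove the reverse divisibility
\[ \mathrm{char}\Big(\hat H^1_{f}(K_\infty,T)/\mathcal H_\infty\Big)\;\Big|\;\mathrm{char}(M). \]
My plan is to combine an explicit reciprocity law with the known divisibility in the anticyclotomic Main Conjecture of Rankin--Selberg type. The starting observation is Theorem \ref{non-triv}, which tells us that $\mathcal H_\infty=\Lambda\cdot\tilde\kappa_1$ is free of rank $1$; thus the denominator $\mathrm{char}\bigl(\hat H^1_f(K_\infty,T)/\mathcal H_\infty\bigr)$ is measured by the ``index'' of the single class $\tilde\kappa_1$ inside the $\Lambda$-adic Selmer group $\hat H^1_f(K_\infty,T)=H^1_{\mathcal F_\Lambda}(K,\T)$, which by Theorem \ref{MR}(1) is also free of rank $1$ over $\Lambda$.

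\textbf{Step 1 (reciprocity law).} Since $p$ splits in $K$, choose a prime $\wp$ of $K$ above $p$ and compose $\mathrm{loc}_\wp$ with the $\Lambda$-adic Bloch--Kato/Perrin-Riou logarithm on $H^1(K_{\infty,\wp},\Fil^+_\wp(\T))$ to obtain a map
\[ \mathrm{Log}_\wp\colon\hat H^1_f(K_\infty,T)\longrightarrow\Lambda\otimes_{\cO_\p}\hat{\cO}_{\wp}^{\mathrm{unr}}. \]
The explicit reciprocity formula of Bertolini--Darmon--Prasanna (in the $\Lambda$-adic form worked out by Castella--Hsieh in \cite{CH}) identifies $\mathrm{Log}_\wp(\tilde\kappa_1)$, up to a unit and an explicit period, with the anticyclotomic $p$-adic $L$-function $\mathcal L_p(f/K)\in\Lambda$ attached to $f$ over $K$. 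This gives an equality (up to units)
\[ \mathrm{char}\Big(\hat H^1_f(K_\infty,T)/\Lambda\tilde\kappa_1\Big)=\bigl(\mathcal L_p(f/K)\bigr)/\mathrm{char}\bigl(\mathrm{coker}(\mathrm{Log}_\wp)\bigr), \]
and the cokernel of $\mathrm{Log}_\wp$ can be controlled by standard Perrin-Riou theory (it is pseudo-null outside a Tamagawa-type finite set of primes, using condition (4) in Assumption \ref{ass}).

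\textbf{Step 2 (Main Conjecture input).} I would then invoke the anticyclotomic Iwasawa Main Conjecture for $f$ of higher weight, in the formulation
\[ \mathrm{char}(\mathcal X_\infty^{\mathrm{tors}})=\bigl(\mathcal L_p(f/K)\bigr), \]
which by Theorem \ref{main-thm}(2) becomes $\mathrm{char}(M)^2=(\mathcal L_p(f/K))$ after matching the pseudo-decomposition $\mathcal X_\infty\sim\Lambda\oplus M\oplus M$ with the ``two copies'' arising from the split prime $p=\wp\bar\wp$. Partial results towards this Main Conjecture have been established via Eisenstein congruences (in the tradition of Skinner--Urban, Wan, and Castella--Hsieh in the higher-weight ordinary case) and give one divisibility ``for free''; combined with Step 1 this produces exactly the missing divisibility.

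\textbf{Main obstacle.} The serious obstacle is the second divisibility in the anticyclotomic Main Conjecture $\mathrm{char}(\mathcal X_\infty^{\mathrm{tors}})\,\big|\,(\mathcal L_p(f/K))$: the Kolyvagin system produced in Theorem \ref{kolyvagin-thm} only gives the Euler-system divisibility, and the opposite one requires Eisenstein-series techniques that are currently developed only under additional hypotheses (ramification at $N$, big image, non-vanishing of $L$-values at the center of symmetry). A subsidiary but delicate issue is proving the $\Lambda$-adic reciprocity law with the precise normalization needed: the period factor relating $\mathrm{Log}_\wp(\tilde\kappa_1)$ to $\mathcal L_p(f/K)$ must be a unit in $\Lambda$, which again brings in Hida's congruence ideal and forces one to rule out exceptional zeros on the $\mathfrak p$-side. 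For these two reasons the statement is offered as a conjecture rather than a theorem.
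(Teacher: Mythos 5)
The statement you are asked to prove is stated in the paper as a \emph{conjecture}: the paper offers no proof of it, and explicitly says only that one divisibility holds (part (3) of Theorem \ref{main-thm}) while the full equality is proposed by analogy with Perrin-Riou's conjecture for Heegner points, for which only the works of Bertolini, Howard and the announcement of Wan are cited as partial or claimed resolutions. So there is no ``paper's own proof'' to compare against. Your proposal is honest about this and correctly identifies that only the divisibility $\mathrm{char}(M)\mid\mathrm{char}\bigl(\hat H^1_f(K_\infty,T)/\HH_\infty\bigr)$ is established. The strategy you outline --- deduce the missing divisibility from a $\Lambda$-adic BDP-type explicit reciprocity law together with the anticyclotomic Rankin--Selberg Iwasawa Main Conjecture --- is the standard route in the literature for this kind of statement and is in line with the paper's own pointers (Perrin-Riou, Howard, Wan, Castella--Hsieh).

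That said, there are gaps worth flagging. First, Step 2 essentially \emph{assumes} the anticyclotomic IMC $\mathrm{char}(\mathcal X_\infty^{\mathrm{tors}})=(\mathcal L_p(f/K))$ (both divisibilities), and one of the two divisibilities there is exactly the content needed here; the Euler-system side comes from Theorem \ref{kolyvagin-thm}, while the Eisenstein-congruence side was (at the time) not available for this precise setup. So your reduction is conceptually right but does not produce new leverage on the conjecture; it trades one open divisibility for another. Second, the heuristic that the two copies of $M$ in $\mathcal X_\infty\sim\Lambda\oplus M\oplus M$ ``arise from the split prime $p=\wp\bar\wp$'' is not what drives the decomposition in Theorem \ref{MR}; the two copies come from the self-duality of $T$ and the involution $\iota$ (hence $\mathrm{char}(M)=\mathrm{char}(M)^\iota$), so the matching of $\mathrm{char}(M)^2$ with $(\mathcal L_p(f/K))$ would need a genuine argument, not just this identification. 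Third, the reciprocity law of Castella--Hsieh is formulated for the $\alpha$-stabilized classes $\mathbf{z}_{f,n,\alpha}$, not directly for the class $\tilde\kappa_1$ built from $\beta[1]$ via Proposition \ref{prop3.6}; one must compare the two (using the formulas of \S\ref{PR-subsec} and the Euler factor $\Phi$), and the BDP $p$-adic $L$-function lives in $\Lambda\widehat{\otimes}\,\hat{\cO}^{\mathrm{unr}}_\wp$ rather than $\Lambda$, so the comparison of characteristic ideals requires a descent argument that you only gesture at. None of this makes your sketch \emph{wrong} as a program, but it confirms the paper's judgment that the equality should be stated as a conjecture; your ``main obstacle'' paragraph identifies the real bottleneck correctly.
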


As remarked above, part (3) of Theorem \ref{main-thm} shows that one divisibility in Conjecture \ref{main-conj} holds true. 

Conjecture \ref{main-conj} should be compared with the ``main conjecture'' for Heegner points on elliptic curves that was proposed by Perrin-Riou in \cite{PR}. Important partial results towards Perrin-Riou's conjecture were obtained by Bertolini (\cite{Ber1}) and Howard (\cite{Ho1}, \cite{Ho2}). Recently, a proof of Perrin-Riou's conjecture has been announced by Wan (\cite{Wan}).

\bibliographystyle{amsplain}
\bibliography{Iwasawa}
\end{document}